\def\thtext#1{
  \catcode`@=11
  \gdef\@thmcountersep{. #1}
  \catcode`@=12
}
\def\threst{
  \catcode`@=11
  \gdef\@thmcountersep{.}
  \catcode`@=12
}
\theoremstyle{plain}
\newtheorem{thm}{Theorem}
\newtheorem{cor}[thm]{Corollary}
\newtheorem{prop}[thm]{Proposition}
\theoremstyle{definition}
\newtheorem{examp}{Example}
\newtheorem{rmk}{Remark}
 \def\.{.\spacefactor\@m}
\newcommand{\0}{\emptyset}
\renewcommand{\:}{\colon}
\renewcommand{\a}{\alpha}
\newcommand{\bX}{\overline{X}}
\newcommand{\bY}{\overline{Y}}
\newcommand{\cA}{\mathcal{A}}
\newcommand{\cC}{\mathcal{C}}
\newcommand{\cR}{\mathcal{R}}
\newcommand{\D}{\Delta}
\newcommand{\diam}{\operatorname{diam}}
\newcommand{\dis}{\operatorname{dis}}
\newcommand{\e}{\varepsilon}
\newcommand{\fc}{\mathfrak{c}}
\newcommand{\g}{\gamma}
\newcommand{\GH}{\operatorname{\mathcal{G\!H}}}
\newcommand{\hH}{\hat{H}}
\newcommand{\hQ}{\widehat{\mathbb{Q}}\strut}
\newcommand{\hR}{\widehat{\mathbb{R}}}
\newcommand{\hX}{\widehat{X}}
\newcommand{\hY}{\widehat{Y}}
\renewcommand{\l}{\lambda}
\newcommand{\N}{\mathbb{N}}
\newcommand{\om}{\omega}
\renewcommand{\r}{\rho}
\newcommand{\rom}[1]{\emph{#1}}
\renewcommand{\)}{\rom)}
\newcommand{\Q}{\mathbb{Q}}
\newcommand{\R}{\mathbb{R}}
\newcommand{\s}{\sigma}
\newcommand{\spe}{\supseteq}
\renewcommand{\ss}{\subset}
\newcommand{\sse}{\subseteq}
\newcommand{\St}{\operatorname{St}}
\newcommand{\tT}{\widetilde{T}}
\newcommand{\tX}{\widetilde{X}}
\newcommand{\toGH}{\xrightarrow{\GH}}
\newcommand{\x}{\times}
\newcommand{\VGH}{\operatorname{\mathcal{VG\!H}}}
\begin{document}
\title{Clouds in Gromov--Hausdorff Class: their completeness and centers}
\author{S.~I.~Bogataya, S.~A.~Bogatyy, V.~V.~Redkozubov, A.~A.~Tuzhilin}
\date{}
\maketitle

\begin{abstract}
We consider the proper class of all metric spaces endowed with the Gromov--Hausdorff distance. Its maximal subclasses, consisting of the spaces on finite distance from each other, we call clouds. Multiplying all distances in a metric space by the same positive real number, we obtain a similarity transformation of the Gromov--Hausdorff class. In our previous work, we observed that with such a transformation, some clouds can jump to others. To characterize the phenomenon, we studied the stabilizers of the similarity action. In this paper, we prove that every cloud with a nontrivial stabilizer has a center, i.e., a metric space for which all similarities from the stabilizer generate a new space at zero distance. Moreover, the center is unique modulo zero distance. The proof is based on the cloud completeness theorem.\footnote{The authors thank P.A.~Borodin, E.A.~Reznichenko and O.V.~Sipacheva. The work was done in Moscow State University, A.A~Tuzhilin was supported by RSF grant N21-11-00355.}

{\bf Keywords:} Metric space, Gromov--Hausdorff distance, group action
\end{abstract}

%%%%%%%%%%%%%%%%%%%%%%%%%%%%%%
\section{Introduction}
%%%%%%%%%%%%%%%%%%%%%%%%%%%%%%
The present work is devoted to the geometry of the Gromov--Hausdorff distance~\cite{Edwards, Gromov1981, Gromov1999, BurBurIva} defined on the collection of all non-empty metric spaces. It is well-known that this distance is a generalized pseudometric vanishing on isometric spaces (``generalize'' means that infinite distances may occur, and the prefix ``pseudo-'' that the distances between different spaces may vanish). Traditionally, the distance is studied on the Gromov--Hausdorff space, in which all metric spaces are compact~\cite{BurBurIva} and considered up to isometry.

M.~Gromov in his ``Metric structures for Riemannian and non-Riemannian spaces''~\cite{Gromov1999} made a short remark: ``One can also make a moduli space of isometry classes of non-compact spaces $X$ lying within a finite Hausdorff distance from a given $X_0$, e.g. $X_0=\R^n$. Such moduli spaces are also complete and contractible.'' This observation was not proved in~\cite{Gromov1999} because it probably seemed obvious.

In the present paper we will prove the completeness of the Gromov--Hausdorff distance in general. Let us note that our proof is not difficult indeed, and we present it mainly to show how the use of correspondences instead of mappings may simplify the construction of direct limits. Our study of stabilizers is based on the completeness theorem, which we considered possible to use only after proof has been presented. Moreover, we cannot prove the second statement of the last sentence of Gromov's quotation and we fully admit that it may turn out to be non-trivial or even incorrect.

In von Neumann-Bernays-G\"odel (NGB) set theory, all objects are called \emph{classes}. A distance function can be specified on each class. We will use the following terminology:
\begin{itemize}
\item a \emph{distance function\/} or, in short, a \emph{distance\/} on a class $\cA$ is an arbitrary mapping $\r\:\cA\x\cA\to[0,\infty]$, for which always $\r(x,x)=0$ and $\r(x,y)=\r(y,x)$ (symmetry) hold;
\item if the distance satisfies the triangle inequality, i.e., if $\r(x,z)\le\r(x,y)+\r(y,z)$ is always satisfied, then $\r$ is called \emph{generalized pseudometric\/} (the word ``generalized'' corresponds to the possibility of taking the value $\infty$);
\item if the positive definiteness condition is additionally satisfied for a generalized pseudometric, i.e., if $\r(x,y)=0$ always implies $x=y$, then we call such $\r$ \emph{generalized metric\/};
\item finally, if $\r$ does not take the value $\infty$, then in the above definitions we will omit the word ``generalized'', and sometimes, to emphasize the absence of $\infty$, we call this distance \emph{finite}.
\end{itemize}

As is customary in metric geometry, instead of $\r(x,y)$ we write $|xy|$ as a rule.

Let us now recall the definitions we need from metric geometry, namely, the Hausdorff and Gromov--Hausdorff distances.

%%%%%%%%%%%%%%%%%%%%%%%%%%%%%%
\subsection{Hausdorff distance}
%%%%%%%%%%%%%%%%%%%%%%%%%%%%%%
Let $X$ be an arbitrary metric space, $x\in X$, $r>0$ and $s\ge 0$ be real numbers. By $U_r(x)$ and $B_s(x)$ we denote, respectively, the \emph{open\/} and \emph{closed balls\/} centered at the point $x$ and with the radii $r$ and $s$. If $A$ and $B$ are non-empty subsets of $X$, then we put $|AB|=|BA|=\inf\bigl\{|ab|:a\in A,\,b\in B\bigr\}$. Next, we define the \emph{open $r$-neighborhood of the set $A$} by setting $U_r(A)=\bigl\{x\in X:|xA|<r\bigr\}$. Finally, the \emph{Hausdorff distance\/} between $A$ and $B$ is the value
$$
d_H(A,B)=\inf\bigl\{r:A\ss U_r(B),\ \ B\subset U_r(A)\bigr\}.
$$
The Hausdorff distance is a generalized pseudometric: it can be infinite, as in the case of the straight line $\R$ and any of its points, and also equal to zero between different subsets, for example, between the segment $[0,1]$ and the interval $(0,1)$. It is important to note that the distance between a set and its dense subset is zero. Therefore, the distance between a metric space and its completion is zero. As a result, one can often assume the completeness of the space under consideration. Nevertheless, on the set consisting of all non-empty bounded closed subsets of a metric space $X$, the Hausdorff distance is a metric.

%%%%%%%%%%%%%%%%%%%%%%%%%%%%%%
\subsection{Gromov--Hausdorff distance}
%%%%%%%%%%%%%%%%%%%%%%%%%%%%%%

We denote by $\VGH$ the class consisting of all non-empty metric spaces. On this class, we define a distance function called the \emph{Gromov--Hausdorff distance\/}:
$$
d_{GH}(X,Y)=\inf\bigl\{d_H(X',Y'):X',Y'\ss Z\in\VGH,\,X'\approx X,\,Y'\approx Y\bigr\},
$$
where for the metric spaces $U$ and $V$ the expression $U\approx V$ means that these spaces are isometric.

The following theorem is well known.

\begin{thm}[\cite{BurBurIva}]
The Gromov--Hausdorff distance is a generalized pseudometric vanishing on each pair of isometric spaces.
\end{thm}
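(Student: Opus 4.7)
The plan is to dispatch symmetry and vanishing on isometric pairs by unpacking the definition, and then to prove the triangle inequality by gluing two $\e$-optimal realizations along a common middle space. Symmetry of $d_{GH}$ follows from symmetry of $d_H$ together with the interchangeability of $X'$ and $Y'$ in the defining infimum. If $X\approx Y$ via an isometry $\phi$, then inside the ambient space $Y$ we have $X':=\phi(X)=Y$ and $Y':=Y$ with $d_H(X',Y')=0$, so $d_{GH}(X,Y)=0$; the special case $X=Y$ gives $d_{GH}(X,X)=0$.

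For the triangle inequality $d_{GH}(X,Z)\le d_{GH}(X,Y)+d_{GH}(Y,Z)$ I may assume both summands are finite, otherwise there is nothing to prove. Fix $\e>0$ and choose ambient metric spaces $W_1,W_2$ together with subsets $X_1,Y_1\ss W_1$ and $Y_2,Z_2\ss W_2$ satisfying $X_1\approx X$, $Y_1\approx Y\approx Y_2$, $Z_2\approx Z$, and
\[
d_H(X_1,Y_1)<d_{GH}(X,Y)+\e,\qquad d_H(Y_2,Z_2)<d_{GH}(Y,Z)+\e,
\]
the Hausdorff distances being computed in $W_1$ and $W_2$ respectively. Fix an isometry $\psi\: Y_1\to Y_2$ and form the quotient $W:=(W_1\sqcup W_2)/\sim$, where $y\sim\psi(y)$ for each $y\in Y_1$. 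Equip $W$ with the distance that restricts to $|\cdot|_{W_i}$ on the image of each $W_i$ and, for $a\in W_1\setminus Y_1$ and $c\in W_2\setminus Y_2$, is defined by
\[
|ac|_W := \inf_{y\in Y_1}\bigl(|ay|_{W_1}+|\psi(y)c|_{W_2}\bigr).
\]

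Routine verification shows that $|\cdot|_W$ is a generalized pseudometric on $W$ and that the canonical maps $W_i\to W$ are isometric embeddings. Let $A,B,C\ss W$ be the images of $X_1$, $Y_1$, $Z_2$; note that $B$ is also the image of $Y_2$. The triangle inequality for the Hausdorff distance in $W$ then yields
\[
d_{GH}(X,Z)\le d_H(A,C)\le d_H(A,B)+d_H(B,C)<d_{GH}(X,Y)+d_{GH}(Y,Z)+2\e,
\]
and letting $\e\to 0$ finishes the proof.

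The main obstacle is checking the triangle inequality for $|\cdot|_W$ in the cross cases. For $a,b\in W_1$ and $c\in W_2\setminus Y_2$, choose near-optimal $y,y'\in Y_1$ in the two defining infima for $|ac|_W$ and $|cb|_W$; combining the triangle inequality in $W_2$ with the isometry identity $|\psi(y)\psi(y')|_{W_2}=|yy'|_{W_1}$ gives $|yy'|_{W_1}\le|\psi(y)c|_{W_2}+|c\psi(y')|_{W_2}$, and the triangle inequality in $W_1$ then yields $|ab|_{W_1}\le|ay|_{W_1}+|yy'|_{W_1}+|y'b|_{W_1}\le|ac|_W+|cb|_W$ up to arbitrarily small slack. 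The remaining cases are analogous or immediate.
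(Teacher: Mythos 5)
The paper offers no proof of this statement---it is quoted as well known from~\cite{BurBurIva}---so there is nothing internal to compare with; your argument is the standard gluing proof from that reference, and it is essentially correct. Symmetry and vanishing on isometric pairs follow from the definition exactly as you say, and gluing the two $\e$-optimal realizations along $Y$ with the cross-distance $|ac|_W=\inf_{y\in Y_1}\bigl(|ay|_{W_1}+|\psi(y)c|_{W_2}\bigr)$ does yield the triangle inequality; the cross-case check you sketch is the only non-trivial verification and it is carried out correctly. One small repair is needed where you settle for a ``generalized pseudometric'': the defining infimum for $d_{GH}$ ranges over genuine metric spaces $Z\in\VGH$, and $|\cdot|_W$ can indeed fail positive definiteness (if $Y_1$ is not closed in $W_1$, a point of $W_1\setminus Y_1$ and a point of $W_2\setminus Y_2$ may end up at distance $0$), so you should pass to the metric quotient of $W$ by zero distances; this map is injective on each $W_i$, hence the images of $X_1$ and $Z_2$ remain isometric to $X$ and $Z$ and all Hausdorff distances are unchanged, after which the definition applies. (Also, ``generalized'' is unnecessary: since $Y\ne\0$, every value of $|\cdot|_W$ is finite.) A slicker route, more in the spirit of this paper, is to use the correspondence formula of Theorem~\ref{thm:dis}: for $R\in\cR(X,Y)$ and $S\in\cR(Y,Z)$ the composition $S\circ R$ lies in $\cR(X,Z)$ and satisfies $\dis(S\circ R)\le\dis R+\dis S$, which gives the triangle inequality in two lines and avoids constructing any ambient space; your construction, on the other hand, stays entirely within the original definition of $d_{GH}$ and needs no auxiliary characterization.
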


This theorem allows us to investigate the Gromov--Hausdorff distance on a ``less wild'' so called \emph{Gromov--Hausdorff class} $\GH$ obtained from $\VGH$ by factorization over the isometry-equivalence for which two metric spaces are equivalent iff they are isometric.

Together with $\GH$, we consider another class $\GH_0$ obtained from $\VGH$ by factorization over the zero-value equivalence for which $X$ is equivalent $Y$ iff $d_{GH}(X,Y)=0$. Clearly that the Gromov--Hausdorff distance is a generalized metric on $\GH_0$.

Consider the relation $\sim_1$ on $\VGH$: we say that $X,Y\in\VGH$ are in relation $\sim_1$ if and only if $d_{GH}(X,Y)<\infty$. It is easy to see that $\sim_1$ is an equivalence, and it generates the corresponding equivalences on both $\GH$ and $\GH_0$. The equivalence classes of this relation will be called {\it clouds}. It is clear that the Gromov--Hausdorff distance between elements of the same cloud is finite, and between elements of different clouds is infinite. The class of all clouds we denote by $\GH_c$. We have evident canonical projections $\VGH\to\GH\to\GH_0$ preserving the Gromov--Hausdorff distance and projection $\GH_0\to\GH_c$. To simplify the statements of the theorems, it is convenient to also consider the equivalence class of $\sim_1$ in $\GH_0$ as a cloud, i.e., under a cloud it is often convenient to consider its projection to the class $\GH_0$.

Since our base object is a metric space, we will denote by Latin capital letter $X$, $Y$, etc., a metric space with a fixed metric, i.e. $X$ is an element in $\VGH$. But since we do not distinguish isometric spaces at all, we retain the same notation for metric spaces, considered up to isometry, i.e. $X$ is also understood as an element in $\GH$. By $X_0=(X)_0$ we denote the class of all spaces $Y$ such that $d_{GH}(X,Y)=0$, i.e. $X_0$ is class of all spaces on zero distance from $X$ and can be considered as element of $\GH_0$. The cloud of the space $X$ we denote by $[X]$ and clouds can be considered as elements of $\GH_c$. It is convenient to call the elements of $\GH$, $\GH_0$, and $\GH_c$ by \emph{metric spaces considered up to the corresponding equivalence}.

Let us describe a few simplest properties of the four classes $\VGH$, $\GH$, $\GH_0$, and $\GH_c$. By $\D_1$ we denote a one-point metric space. Also, for a metric space $(X,\varrho )$ and a real number $\l>0$, we denote by $\l X=(X,\l \varrho )$ the metric space that is obtained from $X$ by multiplying its all distances by $\l$. The transformation $H_\l\:\VGH\to\VGH$, $H_\l\:X\mapsto\l X$ for $\l>0$ we call \emph{similarity with the coefficient $\l$}.

\begin{thm}[\cite{BurBurIva}]\label{thm:estim}
For any metric spaces $X$ and $Y$,
\begin{enumerate}
\item\label{thm:estim:1} $2d_{GH}(\D_1,X)=\diam X$\rom;
\item\label{thm:estim:2} $2d_{GH}(X,Y)\le\max\{\diam X,\diam Y\}$\rom;
\item\label{thm:estim:3} if the diameter of $X$ or $Y$ is finite, then $\bigl|\diam X-\diam Y\big|\le2d_{GH}(X,Y)$.
\item\label{thm:estim:4} if the diameter of $X$ is finite, then for any $\l>0$ and $\mu>0$ we have $d_{GH}(\l X,\mu X)=\frac12|\l-\mu|\diam X$, whence it immediately follows that the curve $\g(t):=t\,X$ is shortest between any of its points, and the length of such a segment of the curve is equal to the distance between its ends\rom;
\item\label{thm:estim:5} for any $\l>0$, we have $d_{GH}(\l X,\l Y)=\l\,d_{GH}(X,Y)$.
\end{enumerate}
\end{thm}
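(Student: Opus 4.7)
My plan is to prove the five parts in order, since each builds on the previous. For \eqref{thm:estim:1}, I would establish the upper bound by an explicit realization: on $X \sqcup \{p\}$, extend the metric of $X$ by $d(x,p) = \diam X/2$; this is a metric because $d_X(x,y) \le \diam X$, and in this ambient space the Hausdorff distance between the two pieces is exactly $\diam X/2$. The lower bound follows from $d_X(x,y) \le d(x,p) + d(y,p) \le 2 d_H(X, \{p\})$ in any common ambient space containing $\D_1$ and $X$. For \eqref{thm:estim:2}, the same trick applies: on $X \sqcup Y$ set $d(x,y) = D/2$ with $D = \max\{\diam X, \diam Y\}$, verify the triangle inequality across the two pieces, and read off the Hausdorff distance $D/2$. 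Part \eqref{thm:estim:3} is immediate from the triangle inequality $|d_{GH}(\D_1, X) - d_{GH}(\D_1, Y)| \le d_{GH}(X, Y)$ combined with \eqref{thm:estim:1}; the finite-diameter hypothesis makes the subtraction meaningful, and if, say, $\diam X < \infty$ while $\diam Y = \infty$, then \eqref{thm:estim:1} forces $d_{GH}(X,Y) = \infty$ by triangle inequality, so both sides are trivially infinite.

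For \eqref{thm:estim:4}, the lower bound is \eqref{thm:estim:3} applied to $\l X$ and $\mu X$, which both have finite diameter. For the matching upper bound, assume $\l \le \mu$ and define an ambient metric on the disjoint union of two copies of $X$: the first carries $\l d_X$, the second $\mu d_X$, and the cross-distance is
\[
d(x^{(1)}, y^{(2)}) = \tfrac{\mu - \l}{2}\diam X + \l\,d_X(x, y).
\]
The Hausdorff distance between the two pieces is then exactly $\tfrac12(\mu-\l)\diam X$, meeting the lower bound. The geodesic statement is a direct corollary: once $d_{GH}(sX, tX) = \tfrac12|s-t|\diam X$ is available, for $s \le t \le u$ the distances add, so every partition of $[s,u]$ produces the same total length, equal to $d_{GH}(sX, uX)$.

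For \eqref{thm:estim:5}, any realization $X', Y' \ss Z$ scales to $\l X', \l Y' \ss \l Z$ with $d_H(\l X', \l Y') = \l\,d_H(X', Y')$; taking infima yields $d_{GH}(\l X, \l Y) \le \l\,d_{GH}(X,Y)$, and applying the same inequality with coefficient $1/\l$ to the pair $\l X, \l Y$ gives the reverse.

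The only genuinely non-routine step is the triangle inequality for the cross-distance in \eqref{thm:estim:4}: one must verify it in every configuration of two endpoints and a midpoint split between the two copies, and in the tightest case it reduces to $(\mu - \l)\,d_X(y_1, y_2) \le (\mu - \l)\diam X$. The additive slack $\tfrac12(\mu-\l)\diam X$ is calibrated to be just large enough to make the construction a metric and just small enough to match the sharp lower bound from \eqref{thm:estim:3}; getting this calibration right is the crux of the argument.
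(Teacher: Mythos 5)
Your proof is correct, but note that the paper itself does not prove this theorem at all: it is stated as a known result with a citation to Burago--Burago--Ivanov, so there is no in-paper argument to compare against. What you give is essentially the standard textbook proof: explicit ambient realizations for the upper bounds in parts (1), (2), (4), the triangle inequality for $d_{GH}$ through $\D_1$ for part (3), and scaling of realizations for part (5); the one step needing genuine verification is the cross-metric in (4), and your check (with binding case $(\mu-\l)\,d_X(y_1,y_2)\le(\mu-\l)\diam X$) is right, as is the telescoping argument for the geodesic statement. Two small points to make explicit. First, the degenerate cases: in (1) and (2) your constructions presuppose finite diameter, but when $\diam X=\infty$ (resp.\ $\max\{\diam X,\diam Y\}=\infty$) the claims follow from the lower bound alone or are vacuous; in (4) the cases $\l=\mu$ or $\diam X=0$ are trivial and must be set aside, since otherwise your cross-distance fails positivity. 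Second, within the toolkit this paper actually uses, the upper bound in (4) is a one-liner: the identity correspondence $R=\bigl\{(x,x):x\in X\bigr\}$ between $\l X$ and $\mu X$ has $\dis R=|\l-\mu|\diam X$, so Theorem~\ref{thm:dis} gives $d_{GH}(\l X,\mu X)\le\frac12|\l-\mu|\diam X$; your explicit metric is exactly the realization such a correspondence induces (compare formula~(\ref{eq:DistForDistort})), so the two routes agree in substance, with the correspondence formulation sparing you the case-by-case triangle-inequality check. Similarly, (2) and (5) follow at once from Theorem~\ref{thm:dis} applied to $R=X\x Y$ and to a distortion-scaling argument, respectively, which is worth knowing even though your direct constructions are sound.
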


Property~(\ref{thm:estim:5}) implies that similarities are well defined on the classes $\GH$, $\GH_0$, and $\GH_c$: $\l X_0=(\l X)_0$ and $\l[X]=[\l X]$. We keep the same notations $H_\l$ for the corresponding mappings defined on $\GH$, $\GH_0$, and $\GH_c$.

Returning to the contractibility of the cloud, we note that formulas~(\ref{thm:estim:4}) and~(\ref{thm:estim:5}) illustrate the existence of a canonical contraction of a cloud of bounded metric spaces to the one-point space $\D_1$. Formula~(\ref{thm:estim:5}) means that the similarity is continuous in space, but formula~(\ref{thm:estim:4}) in all other clouds does not guarantee continuity with respect to the contraction parameter $\l$.

The authors constructed~\cite[Corollary~5.9]{BT21},~\cite{BBRT22V} an example of a space $X$ such that the spaces $X$ and $\l X$ lie in the same cloud if and only if $\l=1$. This means that, in the general case, the similarity cannot contract the cloud by itself. Therefore, the authors believe that the statement about the contractibility of any cloud is currently a hypothesis.

The main question investigated in the paper can be informally formulated as follows.

\emph{Is it possible to find a center in a cloud of unbounded space, i.e. such a space that, similarly to the $\D_1$, remains fixed under the action of all $H_\l$ and attracts all other spaces as $\l\to0$}?

To do this, we first study the \emph{stabilizer} --- the set of multipliers $\l$ that do not remove space out of its cloud.

Next, we show that a cloud with a non-trivial stabilizer has a common fixed point (in $\GH_0$) for all similarities from the stabilizer, and that the fixed point is unique (Corollary~\ref{cor:St}). This point is called \emph{the center of the cloud}.

Next, we study the properties of spaces from the center of the cloud at the level of the class $\GH$.

In~\cite{BT21,BT22P,BB22T} we investigated the stabilizers of geometric progressions. In the present paper we describe in details so-called ``discrete hedgehog with intrinsic metric''. Such spaces consist of subsets of positive real numbers with added $0$ and endowed with the distance equal to the sum of coordinates: for $x_1$ and $x_2$, the distance between them is $x_1+x_2$. In particular, we show that the family of such hedgehogs is closed in the following sense: each complete metric space on zero distance from a hedgehog is isometric to a hedgehog (of a set with multiplicities of points) (Theorem~\ref{thm:ezh0disthatX}). Let us mention that the zero distance between even boundedly compact metric spaces does not imply that the spaces are isometric~\cite{Ghanaat}. Two locally compact countable bounded metric spaces at distance zero can even be non-homeomorphic (Example~\ref{examp:Tuzhilin}). We put
\begin{gather*}
\St X=\{\l\in\R_+:\text{$\l X$ is isometric to $X$}\}, \\
\St_0X=\bigl\{\l\in\R_+:d_{GH}(X,\l X)=0\bigr\}=\bigl\{\l\in\R_+:H_\l(X_0)=X_0\bigr\},\\
\St[X]=\bigl\{\l\in\R_+:d_{GH}(X,\l X)<\infty\bigr\}=\Bigl\{\l\in\R_+:H_\l\bigl([X]\bigr)=[X]\Bigr\}.
\end{gather*}
It is clear that there are inclusions
$$
\St X\sse\St_0X\sse\St[X]\sse \R_+.
$$
It is also important to note that
\begin{enumerate}
\item\label{enum:Stab:1} if some spaces $X$ and $Y$ are isometric, then $\St X=\St Y$;
\item\label{enum:Stab:2} if $d_{GH}(X,Y)=0$, then $\St_0X=\St_0Y$;
\item\label{enum:Stab:3} if $d_{GH}(X,Y)<\infty$, then $\St[X]=\St[Y]$.
\end{enumerate}

At last, we investigated in~\cite{BT22P} the similarity transformations action on a normed vector space $V$. The corresponding stabilizer of a set $X\ss V$, i.e. of an element $X\in 2^V$ will be denoted by $\St_VX$. Thus,
$$
\St_VX=\{\l\in\R_+:\l X=X\}.
$$
In this case, there is a longer chain of inclusions
$$
\St_VX\sse\St X\sse\St_0X\sse\St[X]\sse\R_+.
$$
P.A.~Borodin drew our attention to the fact that it is natural to consider the hedgehog $\hX$ over the set $X$ as a subset of the Banach space $\ell_1(X)$. Hedgehogs give examples with different ratios of stabilizers. For example, $\St_{\ell_1}\hX=\{1\}$, $\St\hX=\St_\R X$ for every $X\ne\0$ (Corollary~\ref{cor:ezhdill}). In particular, $\St_\R H=\St\hH=H$ for every subgroup $H\sse\R_+$.

For a cloud $[X]$ with a nontrivial stabilizer $\St[X]\ne\{1\}$ by \emph{center\/} we call
$$
Z[X]=\bigl\{Y\in[X]:\text{$Y$ is complete and $\St_0Y=\St[X]$}\bigr\}.
$$
If $\St[X]\ne\{1\}$ then $Z[X]\ne\0$ and $d_{GH}(Y_1,Y_2)=0$ for any $Y_1,Y_2\in Z[X]$ (Corollary~\ref{cor:St}). And vice versa, if $d_{GH}(Y_1,Y_2)=0$ and $Y_1\in Z[X]$, then $Y_2\in Z[X]$, see~(\ref{enum:Stab:2}). According to Proposition~\ref{prop:St} the center of the cloud can be described as the family of all spaces with a non-trivial stabilizer; and the stabilizer of any such element is maximally possible for spaces from the cloud: it is equal to $\St[X]$.

The hedgehog $\hQ_+$ over positive rational numbers is interesting because that the complete space $Y$ has zero distance to this hedgehog $\hQ_+$ if and only if it is isometric to a hedgehog over some countable dense subset $\R_+$ (Corollary~\ref{cor:ezh0distQ}) and for it $|\St Y|\le\aleph_0$, $\St_0Y=\R_+$ (Corollary~\ref{cor:isometryhatQ}). There is exactly $\fc=2^{\aleph_0}$ of such pairwise non-isometric spaces (Theorem~\ref{thm:StabilhatQ}).

For any countable subgroup $H\sse\R_+$ ($|H|\le\aleph_0$) there is a hedgehog $\hX$, such that $d_{GH}(\hX,\hQ_+)=0$ (Corollary~\ref{thm:StabilhatQ}) and $\St_\R X=\St\hX=H\subset \R_+=\St_0\hX$ (Corollary~\ref{cor:isometryH}).

The hedgehog $\hR_+$ over positive numbers is interesting because that at zero distance from it there are exactly $2^\fc=2^{2^{\aleph_0}}$ pairwise non-isometric complete spaces (they all have weight $\fc$) (Theorem~\ref{thm:StabilhatR}). For any subgroup $H\sse\R_+$ with $|H|=\fc$ (there are exactly $2^\fc$ of such subgroups), $d_{GH}(\hH,\hR_+)=0$ and the chain of inclusions
$$
\St_{\ell_1(\fc)}\hH=\{1\}\ss H=\St_\R H= \St\hH\ss\R_+=\St_0\hH
$$
is valid.

%%%%%%%%%%%%%%%%%%%%%%%%%%%%%%
\section{Completeness of the Gromov--Hausdorff class}
%%%%%%%%%%%%%%%%%%%%%%%%%%%%%%

The purpose of this subsection is to prove one more fundamental property of the Gromov--Hausdorff class. Since we will use in our proof the technique of correspondences, let us recall the definition and some necessary well-known results.

Let $X$ and $Y$ be some sets. A \emph{relation\/} between $X$ and $Y$ is an arbitrary subset $R\ss X\x Y$. If $X,Y\in\VGH$ are non-empty metric spaces, and $\s$ is a non-empty relation, its \emph{distortion $\dis\s$} is defined as follows:
$$
\dis\s=\sup\Bigl\{\bigl||xx'|-|yy'|\bigr|:(x,y),\,(x',y')\in\s\Bigr\}.
$$

A relation $R\ss X\x Y$ between sets $X$ and $Y$ is called a \emph{correspondence\/} if for any $x\in X$ there exists $y\in Y$, and for any $y\in Y$ there exists $x\in X$, such that $(x,y)\in R$. Thus, the correspondences can be considered as multivalued surjective mappings.

For a correspondence $R\ss X\x Y$ and $x\in X$, we put $R(x)=\big\{y\in Y:(x,y)\in R\big\}$ and call $R(x)$ the \emph{image of the element $x$ under the correspondence $R$}. By $\cR(X,Y)$ we denote the set of all correspondences between $X$ and $Y$. The following result is well-known.

\begin{thm}[see~\cite{BurBurIva}]\label{thm:dis}
For any $X,Y\in\VGH$, it holds
$$
d_{GH}(X,Y)=\frac12\inf\bigl\{\dis R:R\in\cR(X,Y)\bigr\}.
$$
\end{thm}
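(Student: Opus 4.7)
The plan is to prove two inequalities separately. For the bound $\frac12\inf\{\dis R\}\le d_{GH}(X,Y)$, I would fix $\e>0$, use the definition of $d_{GH}$ to obtain isometric copies $X',Y'\ss Z\in\VGH$ with $d_H(X',Y')<d_{GH}(X,Y)+\e=:r$, and then define the relation $R=\{(x,y)\in X\x Y:|x'y'|_Z\le r\}$, where $x',y'$ are the images of $x,y$ in $Z$. Since $d_H(X',Y')<r$, every point of $X$ pairs with some point of $Y$ and vice versa, so $R\in\cR(X,Y)$. For $(x_1,y_1),(x_2,y_2)\in R$, the triangle inequality in $Z$ gives $|x_1x_2|=|x_1'x_2'|\le|x_1'y_1'|+|y_1'y_2'|+|y_2'x_2'|\le|y_1y_2|+2r$, and symmetrically, so $\dis R\le 2r$. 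Letting $\e\to 0$ yields the inequality.

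For the opposite bound $d_{GH}(X,Y)\le\frac12\dis R$, given $R\in\cR(X,Y)$ with $\dis R=r$, the plan is to construct an admissible ``bridge'' distance on the disjoint union $X\sqcup Y$ extending the given metrics, in which $X$ and $Y$ lie at Hausdorff distance $\le r/2$. I would set
$$
d(x,y)=\inf\bigl\{|xx'|_X+r/2+|y'y|_Y:(x',y')\in R\bigr\}\quad\text{for } x\in X,\ y\in Y,
$$
and keep the original metrics on $X\x X$ and $Y\x Y$. By surjectivity of $R$ each $x\in X$ pairs with some $y\in Y$, giving $d(x,y)\le r/2$, and likewise for points of $Y$; once $d$ is shown to be a pseudometric, passing to the metric quotient embeds both spaces isometrically into a common metric space at Hausdorff distance $\le r/2$, which gives $d_{GH}(X,Y)\le r/2$.

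The main technical step, and the one most likely to cause problems, is verifying the triangle inequality for the mixed cases, and simultaneously checking that $d$ restricted to $X\x X$ still equals $|\cdot\cdot|_X$ (and likewise on $Y$) — that is, a detour $x_1\to y\to x_2$ through the other side cannot be shorter than $|x_1x_2|_X$. Both boil down to the same fundamental consequence of the distortion bound: for $(x_1',y_1'),(x_2',y_2')\in R$,
$$
\bigl||x_1'x_2'|_X-|y_1'y_2'|_Y\bigr|\le r,
$$
so $|x_1'x_2'|_X\le|y_1'y_2'|_Y+r$ and vice versa. Using this, the mixed triangle inequality $d(x_1,y_2)\le d(x_1,y_1)+d(y_1,x_2)$ (etc.) follows by chaining the two defining infima, and the extra $r/2+r/2=r$ accumulated on the right exactly absorbs the distortion slack. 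Once these verifications are laid out carefully, both inequalities combine to the claimed identity.
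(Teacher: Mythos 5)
Your proposal is correct, and it is the standard argument for this classical fact: the paper itself gives no proof (it simply cites~\cite{BurBurIva}), and your two inequalities follow exactly the textbook route. Note in particular that your ``bridge'' distance in the second half is precisely the gluing formula~(\ref{eq:DistForDistort}) of Proposition~\ref{prop:correspondence-to-pseudometric}, which the paper imports from~\cite{TuzLect}; apart from the trivial separate handling of the degenerate cases $d_{GH}(X,Y)=\infty$ and $\dis R\in\{0,\infty\}$, nothing is missing.
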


\begin{thm}\label{thm:GHcomplete}
The Gromov--Hausdorff class $\VGH$ is complete. In particular, all clouds are complete.
\end{thm}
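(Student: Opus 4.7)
The plan is a direct-limit construction using correspondences. I would start with a Cauchy sequence $(X_n)$ in $\VGH$ and, after passing to a subsequence (harmless for convergence), assume $d_{GH}(X_n,X_{n+1})<2^{-n}$ for every $n$; by Theorem~\ref{thm:dis} I then fix correspondences $R_n\in\cR(X_n,X_{n+1})$ with $\dis R_n<2^{1-n}$. The Cauchy condition automatically places all tail members in a common cloud, so paired distances start out finite.

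Next I would introduce the set $T$ of \emph{coherent threads}: sequences $\bar x=(x_n)$ with $x_n\in X_n$ and $(x_n,x_{n+1})\in R_n$ for every $n$. For any $\bar x,\bar y\in T$, the defining distortion bound gives $\bigl||x_ny_n|-|x_{n+1}y_{n+1}|\bigr|<2^{1-n}$, so the real sequence $|x_ny_n|$ is Cauchy and, being bounded by its initial term plus a summable tail, has a finite limit $\r(\bar x,\bar y):=\lim_n|x_ny_n|$. Symmetry and the triangle inequality pass to the termwise limit, so $\r$ is a pseudometric and the metric quotient $X:=T/\{\r=0\}$ is the candidate limit. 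Every $a\in X_n$ extends to a coherent thread --- forward by iterated choice in the nonempty fibres $R_k(x_k)$, backward by the dual surjectivity of the $R_{k-1}$ --- so the rule $(a,[\bar x])\in\tT_n\iff x_n=a$ defines a correspondence $\tT_n\in\cR(X_n,X)$.

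To finish, telescoping the $\dis R_k$ bounds yields, for any $(a,[\bar x]),(a',[\bar x'])\in\tT_n$,
$$
\bigl||aa'|-\r(\bar x,\bar x')\bigr|\le\sum_{k=n}^{\infty}\dis R_k<\sum_{k=n}^{\infty}2^{1-k}=2^{2-n},
$$
so $\dis\tT_n<2^{2-n}$ and $d_{GH}(X_n,X)<2^{1-n}\to 0$ by Theorem~\ref{thm:dis}. Since the original sequence is Cauchy and admits a convergent subsequence, it also converges to $X$; and as $X$ sits in the common cloud of the tail, the ``in particular'' statement about clouds is free. I expect the only genuinely non-routine point to be the finiteness of $\lim_n|x_ny_n|$ (rather than $\infty$), which is exactly what the exponential decay of $\dis R_n$ and Cauchy-cloud compatibility together guarantee; correspondences then turn the two-sided surjectivity of $\tT_n$ and the distortion estimate into one-line arguments, avoiding the approximate-isometry gluing that a mapping-based approach would require.
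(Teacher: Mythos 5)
Your proposal is correct and follows essentially the same route as the paper's proof: threads built from a chain of correspondences with summable distortions, the limit pseudometric on threads, the metric quotient as the limit space, and a distortion estimate for the correspondence between $X_n$ and the quotient, followed by the standard subsequence-to-full-sequence argument. The only differences are minor constants and that you spell out the backward extension of threads and the cloud remark slightly more explicitly than the paper does.
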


\begin{proof}
Let $X_1,X_2,\ldots $ be a fundamental sequence. Without loss of generality, we will assume that $2d_{GH}(X_n,X_{n+1})\le1/2^n$. For each $n$, choose a correspondence $R_n\in\cR(X_n,X_{n+1})$ such that $\operatorname{dis}R_n<1/2^n$. We put $\bX=\sqcup_{n=1}^\infty X_n$ and for each $x_1\in X_1$ we call the sequence $x_1,x_2,\ldots \in \bX$ a \emph{thread starting at $x_1$}, if for any $n\ge1$ it holds $x_{n+1}\in R_n(x_n)$. We denote the set of all such threads by $N(\bX)$. For threads $\nu=(x_1,x_2,\ldots)$ and $\nu'=(x'_1,x'_2,\ldots)$ consider the sequence $|x_1x'_1|,|x_2x'_2|,\ldots$ of real numbers. Since $\dis R_n<1/2^n$, then $\bigl||x_nx'_n|-|x_{n+1}x'_{n+1}|\bigr|<1/2^n$, whence for any $m\ge1$ we have
\begin{multline*}
\bigl||x_nx'_n|-|x_{n+m}x'_{n+m}|\bigr|=\\
\bigl||x_nx'_n|-|x_{n+1}x'_{n+1}|+|x_{n+1}x'_{n+1}|-\cdots-|x_{n+m}x'_{n+m}|\bigr|\le\\
\sum_{k=1}^m\bigl||x_{n+k-1}x'_{n+k-1}|-|x_{n+k}x'_{n+k}|\bigr|<\sum_{k=1}^m\frac1{2^{n+k-1}}<\frac1{2^{n-1}},
\end{multline*}
therefore, the sequence of numbers $|x_1x'_1|,|x_2x'_2|,\ldots$ is fundamental and, therefore, there is a limit, which we denote by $|\nu\nu'|$. It is clear that these limits define a distance function on $N(\bX)$. Since for any three threads $\nu=(x_1,x_2,\ldots)$, $\nu'=(x'_1,x'_2,\ldots)$, $\nu''=(x''_ 1,x''_2,\ldots)$, and for each $n$, the triangle inequalities hold for the points $x_n,x'_n,x''_n$, the same is true for the distances between $\nu$, $\nu'$ and $\nu''$. Thus, we have defined a pseudometrics on $N(\bX)$. Having factorized the resulting pseudometric space with respect to the equivalence relation generated by zero distances, we obtain a metric space, which we denote by $X$. For a thread $\nu \in N(\bX)$, the class of this equivalence containing $\nu $ will be denoted by $[\nu]$.

For each $n$, consider the relation $R'_n\ss X\x X_n$ defined as follows: for each $x\in X$, consider all threads $\nu=(x_1,x_2,\ldots,x_n,\ldots)\in x$ and put all $(x,x_n)$ into $R'_n$. Since each $x_n$ belongs to some thread, then $R'\in\cR(X,X_n)$. Let us estimate the distortion of the correspondence $R '$. Choose arbitrary $x,x'\in X$, threads $\nu=(x_1,x_2,\ldots,x_n,\ldots)\in x$ and $\nu'=(x'_1,x'_2,\ldots,x'_n,\ldots)\in x'$, then $|\nu\nu'|=\lim_{k\to\infty}|x_kx'_k|$, and as shown above, $\bigl||x_nx'_n|-|x_{n+m}x'_{n+m}|\bigr|<1/2^{n-1}$, whence $\bigl||x_nx'_n|-|\nu\nu'|\bigr|\le1/2^{n-1}$. Thus, $\dis R'\le1/2^{n-1}$, so $X_n\toGH X$.
\end{proof}

\begin{rmk}
As we mentioned in Introduction, the distance between the corresponding elements of the ``quotients'' $\GH$ and $\GH_0$ are the same, that is why Theorem~\ref{thm:GHcomplete} implies completeness of all the classes and their clouds.
\end{rmk}

\begin{rmk}\label{rmk:complete}
Without loss of generality, we can always assume that the limit space of the fundamental sequence is complete.
\end{rmk}

%%%%%%%%%%%%%%%%%%%%%%%%%%%%%%
\section{Collective isometric embedding}
%%%%%%%%%%%%%%%%%%%%%%%%%%%%%%
One of the famous M.~Gromov theorems states that for each totally bounded family $\cC$ consisting of non-empty compact metric spaces, there exists a compact set $K$ of the Banach space $\ell_\infty$ such that each space $X\in\cC$ can be isometrically embedded into $K$, see~\cite{BurBurIva} for example. We need a rather simple result: let we are given with
\begin{itemize}
\item a family $\cC$ of (not necessarily compact) metric spaces;
\item a tree $G$ (connected acyclic graph) with vertex set $\cC$ and edge set $E$;
\item a ``weight'' function $\om$: for each $e\in E$ we choose an arbitrary correspondence $R_{XY}\in\cR(X,Y)$, we  put $R_{YX}=R_{XY}^{-1}\in\cR(Y,X)$, and we define $\om(e)=\frac12\dis R_{XY}=\frac12\dis R_{YX}$;
\item we assume that for all $e\in E$ we have $0<\om(e)<\infty$.
\end{itemize}
We wish to construct a metric on $Z:=\sqcup_{X\in\cC}X$ which extends the metrics of all the $X$ in such a way that $d_H(X,Y)=\om(XY)$ for all $XY\in E$. To do that, we use the ideas described in~\cite{TuzLect}.

Given metric spaces $X$ and $Y$, consider an arbitrary correspondence $R\in\cR(X,Y)$. Suppose that $0<\dis R<\infty$. Extend the metrics of $X$ and $Y$ upto a symmetric function $|\cdot|_R$ defined on pairs of points from $X\sqcup Y$: for $x\in X$ and $y\in Y$ put
\begin{equation}\label{eq:DistForDistort}
|xy|_R=|yx|_R=\inf\Bigl\{|xx'|+|yy'|+\frac12\dis R:(x',y')\in R\Bigr\}. \\
\end{equation}

\begin{prop}[\cite{TuzLect}]\label{prop:correspondence-to-pseudometric}
Under above assumptions, $|\cdot|_R$ is a metric on $X\sqcup Y$, and $d_H(X,Y)=\frac12\dis R$, where $d_H$ is the corresponding Hausdorff distance.
\end{prop}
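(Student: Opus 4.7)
The plan is to verify the three metric axioms for $|\cdot|_R$ on $X\sqcup Y$ and then compute the Hausdorff distance separately. Symmetry is immediate from the definition. For positive definiteness, within $X$ or within $Y$ the extended function coincides with the original metric, so positivity is inherited there. For $x\in X$ and $y\in Y$, every term in the infimum defining $|xy|_R$ is bounded below by $\frac12\dis R$, so $|xy|_R\ge\frac12\dis R>0$ by the standing hypothesis $\dis R>0$; in particular two distinct points from different sides are at positive distance.

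For the triangle inequality, if all three points lie in one factor it is inherited. Suppose two lie in $X$, say $x_1,x_2$, and $y\in Y$. The inequality $|x_1y|_R\le|x_1x_2|+|x_2y|_R$ follows by fixing, for arbitrary $\e>0$, a near-optimal $(x_2',y')\in R$ with $|x_2x_2'|+|yy'|+\frac12\dis R\le|x_2y|_R+\e$ and estimating $|x_1y|_R\le|x_1x_2'|+|yy'|+\frac12\dis R\le|x_1x_2|+|x_2y|_R+\e$ via the triangle inequality in $X$. The symmetric inequality $|x_1x_2|\le|x_1y|_R+|x_2y|_R$ is where the distortion bound enters. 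Fixing near-optima $(x_i',y_i')\in R$ for $|x_iy|_R$ ($i=1,2$), I would write
\[
|x_1x_2|\le|x_1x_1'|+|x_1'x_2'|+|x_2'x_2|,
\]
bound $|x_1'x_2'|\le|y_1'y_2'|+\dis R$ by the very definition of $\dis R$, bound $|y_1'y_2'|\le|y_1'y|+|yy_2'|$ by the triangle inequality in $Y$, and regroup to conclude $|x_1x_2|\le|x_1y|_R+|x_2y|_R+2\e$. The cases with two points in $Y$ are symmetric.

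For the Hausdorff distance, the lower bound $d_H(X,Y)\ge\frac12\dis R$ is immediate since $|xy|_R\ge\frac12\dis R$ for all $x\in X$, $y\in Y$. For the upper bound, surjectivity of the correspondence $R$ yields, for each $y\in Y$, some $x'\in X$ with $(x',y)\in R$; specializing the infimum to $(x'',y'')=(x',y)$ gives $|x'y|_R\le\frac12\dis R$, and the symmetric statement holds for each $x\in X$. The main obstacle I expect is the careful bookkeeping of $\e$-choices in the mixed-type triangle inequality, where the distortion has to be absorbed exactly once; the rest is direct from the definitions.
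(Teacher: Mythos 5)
Your proof is correct, and since the paper states this proposition without proof (citing \cite{TuzLect}), there is nothing different to compare it against: your verification is precisely the standard argument behind the statement. The two essential points are exactly the ones you isolate --- in the mixed triangle inequality the distortion bound $\bigl||x_1'x_2'|-|y_1'y_2'|\bigr|\le\dis R$ is absorbed once, split as the two summands $\frac12\dis R$ hidden in $|x_1y|_R$ and $|x_2y|_R$, and for the Hausdorff distance the uniform lower bound $|xy|_R\ge\frac12\dis R$ together with the choice $(x',y)\in R$ (using that $R$ is a correspondence) gives both inequalities $d_H(X,Y)\ge\frac12\dis R$ and $d_H(X,Y)\le\frac12\dis R$.
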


\begin{thm}\label{thm:CollectiveIsoEmbed}
For $(\cC,G,\om)$, there exists an extension of metrics from all $X\in\cC$ to the whole $Z=\sqcup_{X\in\cC}X$, which satisfies the conditions $d_H(X,Y)=\om(XY)$ for all edges $XY$ of the tree $G$.
\end{thm}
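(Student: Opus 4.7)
The plan is to define on $Z$ a path pseudometric $d$ induced by the collection of two-space metrics $|\cdot|_{R_{XY}}$ from Proposition~\ref{prop:correspondence-to-pseudometric} (one for each edge $XY$ of $G$), and then to show $d$ restricts on every edge $X\sqcup Y$ to exactly $|\cdot|_{R_{XY}}$, whence the Hausdorff identity follows for free.

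First, I would call an \emph{admissible walk} in $Z$ any finite sequence $u=z_0,z_1,\ldots,z_k=v$ such that for every $i$ the spaces $A_i,A_{i+1}\in\cC$ containing $z_i,z_{i+1}$ either coincide or span an edge of $G$; its length is $\sum_{i=0}^{k-1}\ell_i$, where $\ell_i=|z_iz_{i+1}|_{A_i}$ when $A_i=A_{i+1}$ and $\ell_i=|z_iz_{i+1}|_{R_{A_iA_{i+1}}}$ otherwise (the latter finite because $\om(e)<\infty$). Define $d(u,v)$ as the infimum of lengths of such walks. Then $d$ is automatically a pseudometric: symmetry is inherited from~\eqref{eq:DistForDistort} and the symmetric tree, and the triangle inequality follows since concatenation of admissible walks is admissible. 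Moreover, one inequality of the desired identities is immediate: the one-step walk $(x,y)$ gives $d(x,y)\le|xy|_{R_{XY}}$ for an edge $XY$, and similarly $d(x,x')\le|xx'|_X$ for $x,x'\in X$.

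The heart of the argument, and the main obstacle, is the reverse inequality, namely that no ``detour'' through sibling branches of the tree can shorten these distances. I would prove it by \emph{detour elimination}: given an admissible walk, look at the projected sequence $A_0,A_1,\ldots,A_k$ in $\cC$, and whenever it contains a \emph{backtrack}, i.e.\ an index with $A_{i-1}=A_{i+1}$ and $A_i\ne A_{i-1}$, replace the two steps $z_{i-1}\to z_i\to z_{i+1}$ by the single step $z_{i-1}\to z_{i+1}$ inside $A_{i-1}$. This does not increase the length, because by the triangle inequality in $|\cdot|_{R_{A_{i-1}A_i}}$ (Proposition~\ref{prop:correspondence-to-pseudometric}) and the fact that $|\cdot|_R$ extends the constituent metrics,
\[
|z_{i-1}z_{i+1}|_{A_{i-1}}=|z_{i-1}z_{i+1}|_{R_{A_{i-1}A_i}}\le|z_{i-1}z_i|_{R_{A_{i-1}A_i}}+|z_iz_{i+1}|_{R_{A_{i-1}A_i}}.
\]
After finitely many such reductions (each decreases $k$) the projected walk is backtrack-free, and, because $G$ is a tree, it must therefore be the unique simple path between its endpoints. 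Acyclicity of $G$ is essential exactly at this step.

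Applied to $x,x'\in X$ (whose simple path is the single vertex $X$), the reduction yields a walk confined to $X$, so $d(x,x')\ge|xx'|_X$ by the triangle inequality in $X$; hence $d|_X=|\cdot|_X$. Applied to $x\in X$ and $y\in Y$ for an edge $XY$, it produces a walk whose projection is $X,Y$; one further triangle-inequality argument in $|\cdot|_{R_{XY}}$ (absorbing the intra-$X$ and intra-$Y$ portions using that $|\cdot|_{R_{XY}}$ extends both) gives $d(x,y)\ge|xy|_{R_{XY}}$, hence $d|_{X\sqcup Y}=|\cdot|_{R_{XY}}$. The desired $d_H(X,Y)=\tfrac12\dis R_{XY}=\om(XY)$ then follows immediately from Proposition~\ref{prop:correspondence-to-pseudometric}. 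No cardinality assumption on $\cC$ is needed, since every admissible walk is finite and the unique tree-path between any two vertices has finite length.
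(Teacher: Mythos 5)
Your construction is correct and is essentially the same gluing-along-the-tree idea as the paper, but the two arguments distribute the work differently. The paper writes the glued distance explicitly: for $x\in X$, $y\in Y$ it takes the infimum of $\sum_{i=1}^n|x_{i-1}x_i|$ over choices of intermediate points $x_k\in X_k$ along the \emph{unique} tree path $X=X_0,\ldots,X_n=Y$, each summand measured by the edge metrics~(\ref{eq:DistForDistort}). With that definition the restriction to $X\sqcup Y$ for an edge $XY$ is literally the formula~(\ref{eq:DistForDistort}) (the path has no intermediate spaces), so the Hausdorff identity is immediate from Proposition~\ref{prop:correspondence-to-pseudometric}; what is left implicit (``the verification \ldots is direct'') is the triangle inequality and positive definiteness of the glued distance. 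You instead take the infimum over \emph{all} admissible walks, which makes the triangle inequality automatic, and then you must prove --- and do prove, via detour elimination using acyclicity --- that walks leaving the tree path do not shorten distances, which recovers the edge restriction. So your proof makes explicit exactly the step the paper glosses over, at the price of having to introduce it; the paper's explicit path formula buys the edge restriction for free at the price of an unverified (though routine) triangle inequality. Both routes rely on the same two facts: $|\cdot|_R$ is a metric on $X\sqcup Y$ extending both factors, and $G$ is a tree.

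Two small points to tighten. First, your notion of backtrack ($A_{i-1}=A_{i+1}$, $A_i\ne A_{i-1}$) does not literally cover detours that contain intra-space steps, e.g.\ a walk with projected sequence $X,Y,Y,X$; you should also allow merging a same-space step into an adjacent cross step (using that $|\cdot|_{R_{XY}}$ extends the metric of $Y$), after which such detours reduce as well --- your final paragraph in effect performs this absorption, so only the statement of the reduction needs adjusting. Second, the theorem asks for a metric on all of $Z$, and you only assert a pseudometric; positive definiteness for points in distinct spaces follows at once because any admissible walk between them must cross the first edge $e_1$ of the tree path joining their spaces (removing $e_1$ separates the tree), and every such cross step has length at least $\om(e_1)>0$ by~(\ref{eq:DistForDistort}). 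Neither point is a genuine gap.
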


\begin{proof}
For each edge $XY\in E$, we use the formula~(\ref{eq:DistForDistort}) to construct the distance on $X\sqcup Y$. After that we extend the distances in the way to generate a gluing metric. Namely, let us choose arbitrary $X,Y\in\cC$ and any points $x\in X$, $y\in Y$. Consider the unique path $X_0=X,X_1,\cdots,X_n=Y$ in the graph $G$ joining $X$ and $Y$, and put
$$
|xy|=\inf\Bigl\{\sum_{i=1}^n|x_{i-1}x_i|:x_0=x,\,x_n=y,\,x_k\in X_k,\,k=1,\ldots,n-1\Bigr\}.
$$
The verification that we have got a metric on $Z$ is direct. Evidently, the metric preserves the distances between point in all $X\sqcup Y$, $XY\in E$, thus, Proposition~\ref{prop:correspondence-to-pseudometric} implies that for such $X$, $Y$ we have $d_H(X,Y)=\frac12\dis R=\om(e)$. The proof is complete.
\end{proof}

\begin{cor}\label{cor:CollectiveIsoEmbed}
Let a metric space $T$, a sequences of metric spaces $T_n$, and positive numbers $M_n>0$, $n\in\N$, be such that $d_{GH}(T,T_n)<M_n$. Suppose also that $T$ is not isometric to $T_n$ for all $n\in\N$. Then there is a metric space $Z$ containing isometric copies all the spaces $T$, $T_n$, $n\in\N$, in such a way that $d_H(T,T_n)<M_n$.
\end{cor}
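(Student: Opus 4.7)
The plan is to reduce the corollary to Theorem~\ref{thm:CollectiveIsoEmbed} by choosing the simplest possible configuration: a star tree. I will set $\cC=\{T\}\cup\{T_n:n\in\N\}$ and take $G$ to be the tree with vertex set $\cC$, central vertex $T$, and edges $e_n=\{T,T_n\}$, one for each $n$. The task then reduces to exhibiting, for every $n$, a correspondence $R_n\in\cR(T,T_n)$ whose half-distortion $\om(e_n)=\frac12\dis R_n$ satisfies $0<\om(e_n)<\infty$ and, additionally, $\om(e_n)<M_n$.

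For the upper bound I will invoke Theorem~\ref{thm:dis}: since $d_{GH}(T,T_n)<M_n$, the infimum of $\frac12\dis R$ over $R\in\cR(T,T_n)$ is strictly less than $M_n$, so one can pick $R_n$ with $\frac12\dis R_n<M_n<\infty$. This simultaneously guarantees $\om(e_n)<\infty$ and the desired estimate $\om(e_n)<M_n$.

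The step I expect to be the most delicate—and the reason the non-isometry hypothesis is in the statement—is the strict positivity $\om(e_n)>0$. I will argue that any correspondence $R\in\cR(T,T_n)$ with $\dis R=0$ forces $T$ and $T_n$ to be isometric: if $(x,y),(x,y')\in R$, then $\bigl||xx|-|yy'|\bigr|=|yy'|\le\dis R=0$, so $R(x)$ is a singleton; symmetrically $R^{-1}(y)$ is a singleton, so $R$ is the graph of a bijection $T\to T_n$, which is an isometry because $\dis R=0$. Contrapositively, since $T\not\approx T_n$ by hypothesis, every correspondence has positive distortion; in particular $\dis R_n>0$, giving $\om(e_n)>0$.

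Having verified $0<\om(e_n)<\infty$ on every edge of $G$, Theorem~\ref{thm:CollectiveIsoEmbed} applies and produces a metric on $Z=T\sqcup\bigl(\sqcup_n T_n\bigr)$ extending the original metrics on each summand and satisfying $d_H(T,T_n)=\om(e_n)=\frac12\dis R_n<M_n$ for every $n\in\N$. The identity inclusions $T\hookrightarrow Z$ and $T_n\hookrightarrow Z$ are then the required isometric embeddings, completing the proof.
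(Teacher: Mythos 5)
Your proof is correct and follows essentially the same route as the paper: a star tree centered at $T$ with edges $TT_n$, correspondences $R_n$ chosen via Theorem~\ref{thm:dis} so that $0<\dis R_n<2M_n$ (positivity coming from the non-isometry hypothesis), and then Theorem~\ref{thm:CollectiveIsoEmbed} applied to this weighted tree. Your explicit verification that a zero-distortion correspondence is the graph of an isometry is a welcome elaboration of a step the paper only asserts.
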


\begin{proof}
Since $d_{GH}\bigl(T,T_n\bigr)<M_n$ and $T$ is not isometric to $T_n$, then for any $n\in\N$ we can take a correspondence $R_n\in\cR(T,T_n)$ with $0<\dis R_n<2M_n$. Let $G$ be the graph with the vertex set $\cC:=\{T_n\}_{n\in\N}\sqcup\{T\}$ and the edge set $E:=\{TT_n\}_{n\in\N}$. Define the weight function $\om\:E\to\R$ by $\om(TT_n)=\frac12\dis R_n$. Clearly that $G$ is a tree, and we can apply Theorem~\ref{thm:CollectiveIsoEmbed} to the triple $(\cC,G,\om)$ to obtain a metric space $Z$ isometrically containing $T$ and the sequence $\{T_n\}_{n=1}^\infty$ in a way that $d_H(T,T_n)=\om(TT_n)=\frac12\dis R_n<M_n$.
\end{proof}

%%%%%%%%%%%%%%%%%%%%%%%%%%%%%%
\section{Cloud center}
%%%%%%%%%%%%%%%%%%%%%%%%%%%%%%

From a metric space $X$, we define the function $d\:(0,\infty)\to[0,\infty]$ as follows: $d(\l)=d_{GH}(X,\l X)$.

\begin{prop}\label{prop:dist}
The function $d\:(0,\infty )\to[0,\infty]$ satisfies the conditions\/\rom:
\begin{enumerate}
\item\label{enum:FuncD:1} $d(1)=0$\rom;
\item\label{enum:FuncD:2} $d(\l^{-1})=\l^{-1}d(\l)$\rom;
\item\label{enum:FuncD:3} $d(\l\mu)\le d(\l)+\l d(\mu)$.
\end{enumerate}
\end{prop}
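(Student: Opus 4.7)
The proposition is essentially a direct unpacking of the definition of $d$ via the two basic facts we have about $d_{GH}$: it is a generalized pseudometric, and it is homogeneous of degree one under similarity (Theorem~\ref{thm:estim}(\ref{thm:estim:5})). I would handle the three items in order.

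For~(\ref{enum:FuncD:1}), since $1\cdot X = X$ as a metric space, $d(1) = d_{GH}(X,X) = 0$ because $d_{GH}$ vanishes on isometric spaces.

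For~(\ref{enum:FuncD:2}), the plan is to rewrite $d(\l^{-1})$ by pulling out a factor of $\l^{-1}$ using Theorem~\ref{thm:estim}(\ref{thm:estim:5}). Namely,
$$
d(\l^{-1}) = d_{GH}(X,\l^{-1}X) = d_{GH}\bigl(\l^{-1}(\l X),\,\l^{-1} X\bigr) = \l^{-1} d_{GH}(\l X,X) = \l^{-1} d(\l).
$$
This is just one line; the only thing to check is that the homogeneity property is valid even when $d_{GH}(\l X, X) = \infty$, which is immediate since in that case both sides are infinite.

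For~(\ref{enum:FuncD:3}), I would insert $\l X$ as an intermediate space and use the triangle inequality (valid on $\VGH$ since $d_{GH}$ is a generalized pseudometric) together again with Theorem~\ref{thm:estim}(\ref{thm:estim:5}):
$$
d(\l\mu) = d_{GH}(X,\l\mu X) \le d_{GH}(X,\l X) + d_{GH}(\l X,\l\mu X) = d(\l) + \l\,d_{GH}(X,\mu X) = d(\l) + \l\,d(\mu).
$$
Again the inequality is to be read in $[0,\infty]$, so it is trivially satisfied whenever any of the summands is infinite.

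There is no real obstacle here: each of the three conclusions reduces in one step to a property of $d_{GH}$ stated in the excerpt. The only conceptual point worth mentioning is that, because $d_{GH}$ is only a \emph{generalized} pseudometric, one should verify that the arithmetic in~(\ref{enum:FuncD:2}) and~(\ref{enum:FuncD:3}) is harmless when infinite values occur; this is handled by the standard conventions $\l^{-1}\cdot\infty=\infty$ and $a+\infty=\infty$ on $[0,\infty]$.
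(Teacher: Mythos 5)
Your proof is correct and follows essentially the same route as the paper: item~(\ref{enum:FuncD:1}) is immediate, item~(\ref{enum:FuncD:2}) is the one-line rescaling via Theorem~\ref{thm:estim}(\ref{thm:estim:5}), and item~(\ref{enum:FuncD:3}) inserts $\l X$ and applies the triangle inequality together with homogeneity. Your extra remark about handling the value $\infty$ is a harmless (and reasonable) addition the paper leaves implicit.
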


\begin{proof}
(\ref{enum:FuncD:1}) Is evident.

(\ref{enum:FuncD:2}) Indeed,
$\l^{-1}d(\l)=\l^{-1}d_{GH}(X,\l X)=d_{GH}(\l^{-1}X,\l^{-1}\l X)=d(\l^{-1})$.

(\ref{enum:FuncD:3})  According to the triangle inequality, the inequality
\begin{multline*}
d(\l\mu)=d_{GH}(X,\l\mu X)\le d_{GH}(X,\l X)+d_{GH}(\l X,\l\mu X)=\\
d(\l)+\l\,d_{GH}(X,\mu X)=d(\l)+\l d(\mu)
\end{multline*}
holds.
\end{proof}

\begin{prop}\label{prop:distlambda}
If $\l<1$ and $d=d(\l)<\infty$, then
$$
d(\l^n)\le\frac{1-\l^n}{1-\l}\,d<\frac{1}{1-\l}\,d<\infty.
$$
\end{prop}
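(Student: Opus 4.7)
The plan is to prove the bound by induction on $n$, the only nontrivial ingredient being the subadditivity-type estimate in Proposition~\ref{prop:dist}(\ref{enum:FuncD:3}).

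For the base case $n=1$ the claimed inequality reads $d(\l)\le\frac{1-\l}{1-\l}d=d$, which is an equality. For the inductive step, assuming $d(\l^n)\le\frac{1-\l^n}{1-\l}d$, I would apply Proposition~\ref{prop:dist}(\ref{enum:FuncD:3}) to the factorization $\l^{n+1}=\l\cdot\l^n$:
$$
d(\l^{n+1})\le d(\l)+\l\,d(\l^n)\le d+\l\cdot\frac{1-\l^n}{1-\l}\,d=\frac{(1-\l)+\l(1-\l^n)}{1-\l}\,d=\frac{1-\l^{n+1}}{1-\l}\,d.
$$
This closes the induction and gives the first inequality in the statement.

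For the remaining two inequalities, since $\l\in(0,1)$ we have $\l^n>0$, so $1-\l^n<1$, yielding $\frac{1-\l^n}{1-\l}d<\frac{1}{1-\l}d$; and the hypothesis $d<\infty$ together with $1-\l>0$ makes the last bound finite. I expect no serious obstacle here; the only thing to double-check is that $d(\mu)$ appears with coefficient $\l$ (not $\mu$) in the triangle-inequality estimate of Proposition~\ref{prop:dist}(\ref{enum:FuncD:3}), which is exactly what makes the telescoping geometric sum $1+\l+\l^2+\cdots+\l^{n-1}=\frac{1-\l^n}{1-\l}$ arise naturally.
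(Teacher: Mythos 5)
Your proof is correct and is essentially the same as the paper's: an induction on $n$ using the inequality $d(\l\mu)\le d(\l)+\l\,d(\mu)$ of Proposition~\ref{prop:dist} with the decomposition $\l^{n+1}=\l\cdot\l^{n}$, which produces the geometric sum $\frac{1-\l^{n}}{1-\l}$. The concluding strict inequalities follow, as you note, from $0<\l<1$ and $d<\infty$.
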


\begin{proof}
According to Proposition~\ref{prop:dist}, by induction we have the inequality
$$
d(\l^n)\le d(\l)+\l\,d(\l^{n-1})\le d+\l\frac{1-\l^{n-1}}{1-\l}\,d=\frac{1-\l^n}{1-\l}\,d<\frac{1}{1-\l}\,d<\infty.
$$
\end{proof}

\begin{prop}
If $\l\in\St[X]$, then there is a complete space $X_\l\in [X]$ such that $d_{GH}(X_\l,\l X_\l)=0$.
\end{prop}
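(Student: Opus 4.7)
The plan is to build $X_\lambda$ as the Gromov--Hausdorff limit of the orbit $\{\lambda^n X\}_{n \ge 0}$, exploiting the completeness of $\VGH$ from Theorem~\ref{thm:GHcomplete} together with the contraction estimate from Proposition~\ref{prop:distlambda}.

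First I would dispose of the trivial case $\lambda=1$ by taking $X_\lambda$ to be the completion of $X$, which sits at zero distance from $X$ and is manifestly fixed by the identity similarity. For $\lambda\ne 1$, I would reduce to the case $\lambda<1$: by Proposition~\ref{prop:dist}(\ref{enum:FuncD:2}) one has $d(\lambda^{-1})=\lambda^{-1}d(\lambda)$, so $d(\lambda)<\infty$ iff $d(\lambda^{-1})<\infty$, and moreover any complete space $X_\mu$ realizing $d_{GH}(X_\mu,\mu X_\mu)=0$ automatically satisfies $d_{GH}(X_\mu,\mu^{-1}X_\mu)=0$ after rescaling, so it is harmless to replace $\lambda$ by $\lambda^{-1}$.

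Next, assuming $0<\lambda<1$ and writing $d=d(\lambda)=d_{GH}(X,\lambda X)<\infty$, the key observation is that the sequence $X_n:=\lambda^n X$ is fundamental in $\VGH$. Indeed, by Theorem~\ref{thm:estim}(\ref{thm:estim:5}) and Proposition~\ref{prop:distlambda},
$$
d_{GH}(\lambda^n X,\lambda^{n+k}X)=\lambda^n\, d(\lambda^k)\le \frac{\lambda^n}{1-\lambda}\,d,
$$
which tends to $0$ as $n\to\infty$, uniformly in $k$. By Theorem~\ref{thm:GHcomplete} and Remark~\ref{rmk:complete}, the sequence $\{X_n\}$ has a complete Gromov--Hausdorff limit $X_\lambda$. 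Since $d_{GH}(X,X_n)\le d/(1-\lambda)$ for all $n$, passing to the limit gives $d_{GH}(X,X_\lambda)<\infty$, so $X_\lambda\in[X]$.

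Finally I would verify $d_{GH}(X_\lambda,\lambda X_\lambda)=0$ via the triangle inequality
$$
d_{GH}(X_\lambda,\lambda X_\lambda)\le d_{GH}(X_\lambda,\lambda^n X)+d_{GH}(\lambda^n X,\lambda^{n+1}X)+d_{GH}(\lambda^{n+1}X,\lambda X_\lambda).
$$
The first term vanishes as $n\to\infty$ by construction; the second equals $\lambda^n d$ which vanishes; and the third, by Theorem~\ref{thm:estim}(\ref{thm:estim:5}), equals $\lambda\, d_{GH}(\lambda^n X,X_\lambda)$ which again vanishes. No step here is really an obstacle --- the only place that might look delicate is the claim that $X_\lambda$ lies in $[X]$, but the uniform bound $d/(1-\lambda)$ provided by Proposition~\ref{prop:distlambda} makes this immediate. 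The whole argument is essentially a telescoping geometric series estimate combined with the completeness theorem of the previous section.
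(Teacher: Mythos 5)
Your proposal is correct and follows essentially the same route as the paper: take $\lambda<1$ without loss of generality, show $\lambda^n X$ is fundamental via the geometric estimate of Proposition~\ref{prop:distlambda}, invoke the completeness Theorem~\ref{thm:GHcomplete} to get a limit $X_\lambda$ in the cloud, and observe that the limit is fixed (up to zero distance) by $H_\lambda$. The extra details you supply (the explicit reduction from $\lambda>1$, the finiteness of $d_{GH}(X,X_\lambda)$, and the three-term triangle inequality at the end) are exactly the steps the paper leaves implicit.
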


\begin{proof}
Without loss of generality, we can assume that $\l<1$. Let $\r$ be the metric of the space $X$. Let us prove that the sequence of metric spaces $X_n=\l^nX\in[X]$ is a fundamental sequence in the cloud $[X]$. Let $d=d(\l)=d_{GH}(X_1,X)<\infty$. According to Proposition~\ref{prop:distlambda}, for $m>n>1$ the inequality
$$
d_{GH}(X_m,X_n)=\l^nd_{GH}(X_{m-n},X)<\l^n\frac{1}{1-\l}\,d
$$
holds. According to the completeness Theorem~\ref{thm:GHcomplete}, this sequence has a limit $(X_\l,\r_\l)=\lim_{n\to\infty}X_n\in[X]$. Then
$$
d_{GH}\bigl((X_\l,\l\r_\l),(X_\l,\r_\l)\bigr)=\lim_{n\to\infty}d_{GH}\bigl(X_{n+1},X_n\bigr)=0.
$$
\end{proof}

\begin{prop}\label{prop:St}
If $\l,\mu\in\St[X]$, then $d_{GH}(X_\l,X_\mu)=0$.
\end{prop}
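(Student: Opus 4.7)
The plan is to rely on only three ingredients: the scaling identity $d_{GH}(cA,cB)=c\,d_{GH}(A,B)$ from Theorem~\ref{thm:estim}(\ref{thm:estim:5}), the triangle inequality, and the self-similarities $d_{GH}(X_\l,\l X_\l)=d_{GH}(X_\mu,\mu X_\mu)=0$ supplied by the previous proposition. As a first reduction, I would assume $\l,\mu<1$: since $\St[X]$ is a group, $\l^{-1},\mu^{-1}\in\St[X]$, and the identity $d_{GH}(X_\l,\l^{-1}X_\l)=\l^{-1}d_{GH}(\l X_\l,X_\l)$ shows that $X_\l$ is self-similar also with ratio $\l^{-1}$, so either exponent may be used in the ``contracting'' role. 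By the same telescoping induction that powers Proposition~\ref{prop:distlambda} (now with zero on the right), one gets $d_{GH}(X_\l,\l^nX_\l)=0$ and $d_{GH}(X_\mu,\mu^nX_\mu)=0$ for every $n\ge 1$.

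The crux is a bootstrapping step showing $\l\in\St_0X_\mu$, i.e.\ $d_{GH}(X_\mu,\l X_\mu)=0$. Set $E=d_{GH}(X_\mu,\l X_\mu)$; this is finite since $\l\in\St[X]=\St[X_\mu]$. Interpolating through $\mu^nX_\mu$ and $\l\mu^nX_\mu$ and applying both self-similarities, the triangle inequality gives
\[
E\le d_{GH}(X_\mu,\mu^nX_\mu)+d_{GH}(\mu^nX_\mu,\l\mu^nX_\mu)+d_{GH}(\l\mu^nX_\mu,\l X_\mu)=0+\mu^nE+\l\cdot 0=\mu^nE.
\]
With $\mu<1$ and $E<\infty$, letting $n\to\infty$ forces $E=0$; a further telescoping then upgrades this to $d_{GH}(X_\mu,\l^nX_\mu)=0$ for every $n$.

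With this in hand the conclusion is immediate. Put $D=d_{GH}(X_\l,X_\mu)$, which is finite because both spaces lie in $[X]$; the chain $X_\l\to\l^nX_\l\to\l^nX_\mu\to X_\mu$ yields
\[
D\le d_{GH}(X_\l,\l^nX_\l)+\l^nD+d_{GH}(\l^nX_\mu,X_\mu)=\l^nD,
\]
and $\l^n\to 0$ forces $D=0$. The one subtle point is the self-bounding inequality $E\le\mu^nE$: it depends on the slightly asymmetric choice of waypoints $\mu^nX_\mu$ and $\l\mu^nX_\mu$, chosen so that both outer legs collapse to $0$ while only the middle leg survives, rescaled by $\mu^n$. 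Everything else is routine manipulation with the scaling law and the triangle inequality.
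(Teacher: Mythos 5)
Your proof is correct, but it follows a genuinely different route from the paper's. The paper stays with the concrete construction of $X_\l$, $X_\mu$ as limits of the Cauchy sequences $\l^nX$, $\mu^nX$: it introduces the auxiliary space $X_{\l\mu}$ (the limit of $(\l\mu)^nX$) and uses the quantitative bound of Proposition~\ref{prop:distlambda} to get $d_{GH}\bigl((\l\mu)^nX,\mu^nX\bigr)=\mu^n d_{GH}(\l^nX,X)<\mu^n\frac{d}{1-\l}\to0$, so $X_{\l\mu}$ coincides with $X_\mu$ modulo zero distance, and symmetrically with $X_\l$; the triangle inequality through $X_{\l\mu}$ finishes. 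You instead discard the construction entirely and argue abstractly from the fixed-point properties $d_{GH}(X_\l,\l X_\l)=0$, $d_{GH}(X_\mu,\mu X_\mu)=0$, finiteness inside the cloud, the scaling law, and the triangle inequality: the self-improving inequality $E\le\mu^nE$ (with the asymmetric waypoints $\mu^nX_\mu$, $\l\mu^nX_\mu$ chosen so the outer legs vanish) shows $\l\in\St_0X_\mu$, after which $D\le\l^nD$ (or, more simply, $D=d_{GH}(\l X_\l,\l X_\mu)=\l D$, which is exactly the computation the paper reserves for Corollary~\ref{cor:St}) forces $D=0$. What your route buys: it applies to \emph{any} representatives in $[X]$ fixed modulo zero distance by nontrivial similarities, not just the specific limits, so it simultaneously yields $\St_0X_\mu\spe\St[X]$ and the uniqueness half of Corollary~\ref{cor:St} without re-invoking the completeness theorem or Proposition~\ref{prop:distlambda}. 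What the paper's route buys is brevity given the machinery already set up: the identification is a one-line estimate on the defining sequences. One shared, minor caveat: both arguments tacitly assume $\l,\mu\ne1$ (the reduction to $\l,\mu<1$), which is the only case used afterwards; and your appeal to $\St[X]$ being a group and to $\St[X]=\St[X_\mu]$ is legitimate, being exactly properties the paper records in the introduction.
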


\begin{proof}
Without loss of generality, we can assume that $\l,\mu<1$. Let us consider the space $X_{\l\mu}\in[X]$. According to Proposition~\ref{prop:distlambda}, for $n>1$ the inequality
$$
d_{GH}\bigl((X,(\l\mu)^{n}\r),(X,\mu^{n}\r)\bigr)=
\mu^nd_{GH}\bigl((X,\l^{n}\r),(X,\r)\bigr)<\mu^n\frac{1}{1-\l}\,d
$$
holds. Hence $d_{GH}(X_{\l\mu},X_\mu)=0$ and $d_{GH}(X_{\l\mu },X_\l)=0$, respectively.
\end{proof}

\begin{cor}\label{cor:St}
For any cloud $[X]$ with $\St[X]\ne \{1\}$, there exists a complete metric space $X_0\in Z[X]$. For any two spaces $X_0,Y_0\in Z[X]$, the equality $d_{GH}(X_0,Y_0)=0$ holds.
\end{cor}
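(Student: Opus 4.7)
The plan is to prove the existence and the uniqueness-modulo-zero-distance assertions of the corollary separately, combining the two preceding propositions with a contraction-type estimate. The key preliminary observation in both parts is that $\St[X]$ is a multiplicative subgroup of $\R_+$: since $H_{\l\mu}=H_\l\circ H_\mu$ and $H_\l^{-1}=H_{\l^{-1}}$, it is closed under products and inversion. Consequently the hypothesis $\St[X]\ne\{1\}$ supplies some $\l\in\St[X]$ with $0<\l<1$.

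For existence, I would fix such a $\l$ and let $X_\l\in[X]$ be the complete space produced by the proposition preceding~\ref{prop:St}, so that $d_{GH}(X_\l,\l X_\l)=0$, i.e.\ $\l\in\St_0 X_\l$. To conclude $X_\l\in Z[X]$, I must verify the equality $\St_0X_\l=\St[X]$. The inclusion $\St_0X_\l\sse\St[X_\l]=\St[X]$ is immediate from the definitions. For the reverse inclusion, given $\mu\in\St[X]$, I may assume $\mu<1$ since $\St_0X_\l$ is also a group. The same proposition then produces $X_\mu\in[X]$ with $\mu\in\St_0X_\mu$, Proposition~\ref{prop:St} gives $d_{GH}(X_\l,X_\mu)=0$, and invariance of $\St_0$ under zero distance (property~(\ref{enum:Stab:2}) in the introduction) yields $\St_0X_\l=\St_0X_\mu\ni\mu$.

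For uniqueness, let $X_0,Y_0\in Z[X]$ and pick any $\l\in\St[X]$ with $\l<1$. Since $\St_0X_0=\St_0Y_0=\St[X]$, we have $\l\in\St_0X_0\cap\St_0Y_0$, hence $d_{GH}(X_0,\l X_0)=d_{GH}(Y_0,\l Y_0)=0$. An induction based on the triangle inequality together with Theorem~\ref{thm:estim}(\ref{thm:estim:5}) extends this to $d_{GH}(X_0,\l^nX_0)=d_{GH}(Y_0,\l^nY_0)=0$ for every $n$. Then
$$
d_{GH}(X_0,Y_0)\le d_{GH}(X_0,\l^nX_0)+d_{GH}(\l^nX_0,\l^nY_0)+d_{GH}(\l^nY_0,Y_0)=\l^n\,d_{GH}(X_0,Y_0).
$$
Because $X_0,Y_0$ lie in the same cloud $[X]$, the quantity $d_{GH}(X_0,Y_0)$ is finite; letting $n\to\infty$ forces it to be zero.

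I do not anticipate a serious obstacle: existence is a bookkeeping consequence of Proposition~\ref{prop:St} and the $\St_0$-invariance property, while uniqueness is essentially a one-line contraction estimate. The only mild subtlety is that the finiteness of $d_{GH}(X_0,Y_0)$, which is needed to absorb the factor $\l^n\to 0$, is precisely what membership in a common cloud provides.
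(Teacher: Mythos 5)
Your proposal is correct and follows essentially the same route as the paper: existence comes from the proposition producing $X_\l$ together with Proposition~\ref{prop:St} and the invariance of $\St_0$ under zero distance, and uniqueness from the scaling identity $d_{GH}(\l X_0,\l Y_0)=\l\,d_{GH}(X_0,Y_0)$ combined with finiteness of distances inside a cloud. Your only deviation is cosmetic: the paper gets uniqueness in one step from $d_{GH}(X_0,Y_0)=\l\,d_{GH}(X_0,Y_0)$ with $\l\ne1$, so your induction to $\l^n$ and the limit $n\to\infty$ are not needed, though they are harmless.
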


\begin{proof}
According to Proposition~\ref{prop:St}, the space $X_0=X_\l$, $\l\in\St[X]\setminus\{1\}$, is well defined, i.e., it does not depend on the choice of such a stabilizer element. Therefore $\St_0X_0\spe\St[X]$. So we have the equality $\St_0X_0=\St[X]$.

Now, let $X_0$ and $Y_0$ be two such spaces. Let $\l\in\St[X]$ and $\l\ne1$. Then
$$
d_{GH}(X_0,Y_0)=d_{GH}\bigl(\l X_0,\l Y_0\bigr)=\l\,d_{GH}(X_0,Y_0).
$$
Therefore $d_{GH}(X_0,Y_0)=0$.
\end{proof}

%%%%%%%%%%%%%%%%%%%%%%%%%%%%%%
\section{Discrete hedgehog}
%%%%%%%%%%%%%%%%%%%%%%%%%%%%%%

Let $X\sse\R_+$ be a non-empty subset. Let each point $x\in X$ be assigned a multiplicity, some nonzero cardinal $\tau_x$. Let $\cA$ be a set such that $\tau_x\le |\cA|$ for every point $x\in X$. Here and throughout the paper, the cardinality of a set $S$ is denoted by the symbol $|S|$. A set $X$ with multiplicities of points can be represented as a subset $A\ss X\x\cA$ such that the section $A_x=\{(x,\a)\in A\}$ has cardinality $\tau_x$. For the uniqueness of such a correspondence, it is convenient to assume that $\cA$ is the set of ordinals, and any section $A_x$ is the beginning of the ordinals of the required cardinality $\tau_x$. For the convenience of the geometric representation, we will identify $A$ with the set $X$, on which it is assumed that a structure of multiplicities of points is fixed. Unless otherwise stated, we always mean a hedgehog over a set each point of which has multiplicity $1$.

A discrete hedgehog is a set $\hX=\{0\}\cup A$ with intrinsic metric:
$$
\r\bigl((x_1,\a_1),(x_2,\a_2)\bigr)=x_1+x_2\ \ \text{for $(x_1,\a_1)\ne(x_2,\a_2)$}.
$$
The point $0$ of hedgehog $\hX$ we denote by $0_X$.

Recall that the \emph{density $d(X)$} of a topological space $X$ is the minimum cardinality of dense subsets.

Verification of the following assertions will not cause difficulties, so we give them without proof.

\begin{prop}\label{prop:propert}
For a non-empty set $X\sse\R_+$, the discrete hedgehog $\hX$ has the following properties\rom:
\begin{enumerate}
\item\label{enum:propert:1} The hedgehog $\hX$ is a complete metric space.
\item\label{enum:propert:2} For any number $\l\in\R_+$ the space $\l\hX$ is naturally identified with hedgehog of the set $\l X$\rom: $\l\hX=\widehat{\l X}$.
\item\label{enum:propert:3} The hedgehog $\hX$ is bounded if and only if the set $X$ is bounded.
\item\label{enum:propert:4} All points of the subset $X\ss\hX$ are discrete.
\item\label{enum:propert:5} The equality $d(\hX)=|A|+1$ is true.
\end{enumerate}
\end{prop}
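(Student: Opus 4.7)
The plan is to read each of the five claims off the defining formula $\r((x_1,\a_1),(x_2,\a_2))=x_1+x_2$ (for distinct points) together with the obvious identity $\r((x,\a),0_X)=x$. Two general observations do all the work: any two distinct points of $\hX$ are at distance at least the minimum of their radial coordinates, and $\r(p,0_X)$ for $p=(x,\a)\in A$ is just the radial coordinate $x$.

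For (1), let $p_n=(x_n,\a_n)$ be Cauchy in $\hX$. If $\liminf x_n>0$, then for all sufficiently large $m,n$ any pair of distinct terms satisfies $\r(p_m,p_n)\ge x_m+x_n$, which is bounded below by a positive constant; the Cauchy property then forces the sequence to be eventually constant. Otherwise some subsequence has $x_{n_k}\to 0$, and Cauchyness propagates this to $x_n\to 0$, yielding $p_n\to 0_X$. Item (2) is immediate: scaling all real coordinates by $\l$ multiplies the metric by $\l$, and the underlying set $A\ss X\x\cA$ is canonically identified with a corresponding subset of $\l X\x\cA$. For (3), $\diam\hX=\sup\bigl\{x_1+x_2:(x_1,\a_1)\ne(x_2,\a_2)\bigr\}$ lies between $\sup X$ and $2\sup X$, hence is finite if and only if $X$ is bounded.

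For (4), every $p=(x,\a)\in A$ satisfies $\r(p,q)\ge x>0$ for every other $q\in\hX$, so $U_x(p)=\{p\}$ is open and $p$ is isolated. For (5), isolation of the points of $A$ forces $A\sse D$ for any dense $D\sse\hX$, whence $d(\hX)\ge|A|$; conversely $\hX=A\cup\{0_X\}$ is trivially dense with cardinality $|A|+1$, so $d(\hX)\le|A|+1$. When $A$ is infinite the two bounds coincide by cardinal arithmetic, since $|A|+1=|A|$; when $A$ is finite, $\hX$ is itself finite and discrete, so the unique dense subset is $\hX$, of cardinality $|A|+1$.

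There is no substantive obstacle here; the proof is bookkeeping from the metric formula, which matches the authors' remark that it causes no difficulty. The only point requiring a moment's thought is the case split in (5) between finite and infinite $A$, needed to unify the bounds $|A|\le d(\hX)\le|A|+1$ into the single formula $d(\hX)=|A|+1$ stated in the proposition.
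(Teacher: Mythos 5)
Your proof is correct. The paper itself offers no argument here --- it states the proposition with the remark that verification ``will not cause difficulties'' --- so there is nothing to compare against; your bookkeeping from the formula $\r\bigl((x_1,\a_1),(x_2,\a_2)\bigr)=x_1+x_2$ is exactly the intended routine verification, and you handle correctly the one point that genuinely needs care, namely item~(5), where for infinite $A$ a dense set need not contain $0_X$ (when $\inf X=0$) but cardinal arithmetic gives $|A|=|A|+1$, while for finite $A$ all points, including $0_X$, are isolated and the only dense subset is $\hX$ itself. (Two cosmetic remarks: in item~(1) a Cauchy sequence may contain the point $0_X$, which your argument covers if you treat it as having radial coordinate $0$; and in item~(3) the displayed supremum should also range over pairs involving $0_X$, though the bounds $\sup X\le\diam\hX\le2\sup X$ you use are valid in any case.)
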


\begin{prop}[\cite{Borisova}]\label{prop:density}
For metric spaces $X$ and $Y$ with $d_{GH}\bigl(X,Y)=0$ the equality $d(X)=d(Y)$ is true.
\end{prop}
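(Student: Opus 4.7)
The plan is to transfer a minimum-cardinality dense subset from $X$ across to $Y$, using that $d_{GH}(X,Y)=0$ provides correspondences of arbitrarily small distortion. Let $D_X\ss X$ be dense with $|D_X|=d(X)$, and for each $n\in\N$ use Theorem~\ref{thm:dis} to fix a correspondence $R_n\in\cR(X,Y)$ with $\dis R_n<1/n$. For each pair $(x,n)\in D_X\x\N$, I would pick some $y_{x,n}\in R_n(x)$ and set $D_Y=\{y_{x,n}:x\in D_X,\,n\in\N\}$. Symmetry of the sought equality then reduces the problem to showing $d(Y)\le d(X)$.

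The key step is verifying that $D_Y$ is dense in $Y$. Given $y\in Y$ and $\e>0$, I pick $n$ with $2/n<\e$, any $x\in X$ with $(x,y)\in R_n$, and then $x'\in D_X$ with $|xx'|<1/n$ by density of $D_X$. Since $(x,y)$ and $(x',y_{x',n})$ both lie in $R_n$, the distortion bound gives $\bigl||y\,y_{x',n}|-|xx'|\bigr|\le\dis R_n<1/n$, whence $|y\,y_{x',n}|<2/n<\e$. So $D_Y$ meets every open ball of $Y$, hence is dense.

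It remains to count cardinalities. If $d(X)\ge\aleph_0$, then $|D_Y|\le|D_X|\cdot\aleph_0=d(X)$, so $d(Y)\le d(X)$ as desired. If $d(X)$ is finite, the above count is too crude, so I would handle this case directly: a finite subset of a metric space is closed, hence a finite dense subset equals the whole space, so $d(X)=|X|=k$ for some $k\in\N$. Choosing $n$ so large that $\dis R_n$ is smaller than every positive distance in $X$, the images $R_n(x_1),\ldots,R_n(x_k)$ become pairwise disjoint (any common point would force $|x_ix_j|<\dis R_n$) and each has diameter $<\dis R_n$; a pigeonhole argument applied to $k+1$ putatively distinct points of $Y$ (taking $n$ larger than the reciprocal of the smallest of their pairwise distances) then yields $|Y|\le k$, while picking one point from each $R_n(x_i)$ gives $|Y|\ge k$, so $d(Y)=|Y|=k=d(X)$.

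The main obstacle is precisely this cardinality accounting: the countable parameter $n$ in the construction of $D_Y$ threatens to inflate its cardinality past $d(X)$ when $d(X)$ is finite, which is why the finite case must be treated separately via pigeonhole rather than by the uniform construction. Once both cases are handled, swapping the roles of $X$ and $Y$ yields the reverse inequality and hence the claimed equality $d(X)=d(Y)$.
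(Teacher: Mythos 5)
Your argument is correct and complete. Note that the paper itself gives no proof of this proposition --- it is quoted from~\cite{Borisova} --- so there is nothing to match against; on its own terms your proof is the natural one: correspondences $R_n\in\cR(X,Y)$ with $\dis R_n<1/n$ exist by Theorem~\ref{thm:dis}, the transferred set $D_Y=\{y_{x,n}\}$ is dense by exactly the distortion estimate you give, and the count $|D_Y|\le d(X)\cdot\aleph_0$ settles the infinite case, with symmetry giving the reverse inequality. You are also right that the finite case needs separate treatment, and your pigeonhole argument for it is sound (disjointness of the sets $R_n(x_i)$ and their small diameter both follow from $\dis R_n$ being below the minimal positive distance involved). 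The only simplification worth pointing out: when $d(X)$ is finite, $X$ is a finite metric space, and zero Gromov--Hausdorff distance from a finite space forces the two spaces to be isometric --- a fact the paper itself invokes at the start of the proof of Theorem~\ref{thm:ezh0disthatX} --- which disposes of that case in one line; your pigeonhole argument is essentially an unwound proof of this fact, so nothing is wrong, it is just longer than necessary. One tiny logical point to make explicit when ``swapping roles'': if $d(X)$ is infinite you should note that $d(Y)$ cannot be finite (else the finite case, applied starting from $Y$, would make $d(X)$ finite), so that the symmetric construction indeed yields $d(X)\le d(Y)$; as written this is implicit but easily supplied.
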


\begin{prop}\label{prop:ezhdist}
For two subsets $X,Y\sse\R_+$ the following properties are equivalent\/\rom:
\begin{enumerate}
\item\label{enum:ezhdist:1} Hedgehogs $\hX$ and $\hY$ are isometric.
\item\label{enum:ezhdist:2} The sets $X$ and $Y$ are the same: $X=Y$ (as sets with multiplicities of points).
\end{enumerate}
\end{prop}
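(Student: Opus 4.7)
The implication $(\ref{enum:ezhdist:2})\Rightarrow(\ref{enum:ezhdist:1})$ is immediate from the definition: any bijection between the index sets $A_X$ and $A_Y$ that respects the first coordinate, together with $0_X\mapsto 0_Y$, preserves the formula $\r\bigl((x_1,\a_1),(x_2,\a_2)\bigr)=x_1+x_2$ verbatim, hence is an isometry. The real work is in the converse direction; so suppose $f\:\hX\to\hY$ is an isometry. My plan has two steps: first show that $f$ must carry $0_X$ to $0_Y$, and then deduce that $f$ preserves first coordinates, from which the equality of multiplicities will follow by a fibrewise bijection count.

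For the first step I would characterize $0_X$ intrinsically by a ``betweenness'' property: a point $p\in\hX$ equals $0_X$ if and only if there exist distinct $q,r\in\hX\setminus\{p\}$ with $|pq|+|pr|=|qr|$. Indeed, for any two non-zero points $q$ and $r$ one has $|0_Xq|+|0_Xr|=x_q+x_r=|qr|$, so $0_X$ satisfies the condition. Conversely, if $p=(x_p,\a_p)\ne 0_X$ and $q,r\in\hX\setminus\{p\}$ are distinct, a short case analysis on whether $q$ or $r$ equals $0_X$ (with the convention $x_{0_X}=0$) yields in every case $|pq|+|pr|-|qr|=2x_p>0$. Since this characterization is isometry-invariant, it forces $f(0_X)=0_Y$ whenever $|\hX|\ge3$; the degenerate case $|\hX|=2$ (i.e.\ $X=\{x\}$ of multiplicity $1$) is immediate from cardinality and the unique non-zero distance.

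Once $f(0_X)=0_Y$ is established, the second step is quick: $|p\,0_X|=x_p$ for every $p=(x_p,\a_p)$, so an isometry that fixes the origin must send each point with first coordinate $x$ to a point with first coordinate $x$. Hence for each $x\in\R_+$ the restriction of $f$ gives a bijection between the fibres $\{(x,\a)\}\ss\hX$ and $\{(x,\beta)\}\ss\hY$, whence $\tau^X_x=\tau^Y_x$ for all $x$ and $X=Y$ as sets with multiplicities. The only real friction I anticipate is phrasing the betweenness characterization cleanly across the trivial small-cardinality cases; once that is in hand, the rest is direct bookkeeping with the defining formula, which is presumably why the authors chose to present the statement without a proof.
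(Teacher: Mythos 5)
Your proof is correct and follows essentially the same route as the paper: the authors' one-line appeal to $0_X$ and $0_Y$ being ``the only metrically intermediate points'' is exactly your betweenness characterization, after which both arguments read off first coordinates as distances to the center. Your explicit case analysis and separate treatment of the two-point hedgehog just spell out details the paper leaves implicit.
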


\begin{proof}
The implication $(\ref{enum:ezhdist:2})\Rightarrow(\ref{enum:ezhdist:1})$ is obvious.

$(\ref{enum:ezhdist:1})\Rightarrow(\ref{enum:ezhdist:2})$.
Without loss of generality, we can assume that $|X|\geq 2$. Let $h\:\hX\to\hY$ be an isometry. Since the points $0_X$ and $0_Y$ are the only metrically intermediate points, then $h(0_X)=0_Y$. Hence $h(X)=Y$. Since the mapping $h$ is isometric it follows
$$
x=|0_X,x|=|h(0_X),h(x)|=|0_Y,h(x)|=h(x)
$$
for any point $x\in X$. Therefore, $Y=h(X)=X$.
\end{proof}

From the above proof it follows

\begin{cor}
For $X\sse\R_+$ with $|X|\geq 2$ any isometry of the hedgehog $\hX$ is identical on the set $X$ of geometric points and gives rise to independent permutations over any geometric point.
\end{cor}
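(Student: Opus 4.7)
The plan is to extract the claim directly from the argument already given in the proof of Proposition~\ref{prop:ezhdist}, applied now to a self-isometry $h\:\hX\to\hX$. The key fact used there was that $0_X$ is the unique \emph{metrically intermediate} point of $\hX$ when $|X|\ge 2$. Once that is established, the invariance of the function $|0_X,\cdot|$ under $h$ pins down the geometric coordinate of each point, and the only remaining freedom is the action of $h$ on the fibers $A_x$.

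First I would verify the characterization of $0_X$. For any two distinct spike points $p_1=(x_1,\a_1)$ and $p_2=(x_2,\a_2)$, the equality $|p_1,0_X|+|0_X,p_2|=x_1+x_2=|p_1p_2|$ shows that $0_X$ lies metrically between them; the hypothesis $|X|\ge 2$ guarantees that such a pair exists. Conversely, for a spike point $p=(x,\a)$ with $x>0$ and any two points $q,r$ of $\hX$ distinct from $p$ and from each other, the hedgehog formula gives directly $|qp|+|pr|=|qr|+2x>|qr|$ in all configurations (both spikes, or one of them equal to $0_X$), so $p$ is never metrically intermediate. Hence every isometry $h$ must send $0_X$ to $0_X$.

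Next, since $h$ preserves the distance to $0_X$, and this distance equals the geometric coordinate $x$ of a point $(x,\a)$, we obtain $h(x,\a)=(x,\a')$ for some $\a'\in A_x$. In particular $h$ is the identity on the underlying set $X\sse\R_+$ of geometric points, and it restricts to a bijection $\s_x\:A_x\to A_x$ on each fiber. Conversely, for any family $\{\s_x\}_{x\in X}$ of permutations chosen independently from one another, the map defined by $0_X\mapsto 0_X$ and $(x,\a)\mapsto(x,\s_x(\a))$ is an isometry, since the hedgehog distance between any two distinct points depends only on their geometric coordinates. Thus the full isometry group of $\hX$ is the product $\prod_{x\in X}\operatorname{Sym}(A_x)$ over the fibers, which is precisely the ``independent permutations'' statement.

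There is no serious obstacle here: the substantive work was already done in Proposition~\ref{prop:ezhdist}. The only delicate point worth checking carefully is the meaning and verification of ``metrically intermediate'' for spike points, which reduces to the routine computation of $|qp|+|pr|=|qr|+2x$ in each case of the hedgehog metric.
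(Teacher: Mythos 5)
Your proof is correct and follows essentially the same route as the paper, which derives the corollary directly from the argument in Proposition~\ref{prop:ezhdist}: the center $0_X$ is the unique metrically intermediate point (this needs $|X|\ge 2$), so any isometry fixes it, and preservation of the distance to $0_X$ fixes every geometric coordinate. Your explicit check that spike points are never intermediate and your converse verification that arbitrary fiberwise permutations are isometries are exactly the details the paper leaves implicit.
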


\begin{cor}\label{cor:ezhdill}
For any non-empty subset $X\sse \R_+$ the following properties are equivalent\/\rom:
\begin{enumerate}
\item Hedgehogs $\hX$ and $\l\hX$ are isometric for every $\l\in\R_+$.
\item The set $X$ coincides with $\R_+$.
\end{enumerate}
\end{cor}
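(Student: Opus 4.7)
The plan is to reduce the corollary to an elementary observation about dilations of subsets of $\R_+$ by composing the two preceding results. First I would apply Proposition~\ref{prop:propert}(\ref{enum:propert:2}) to identify $\l\hX$ with $\widehat{\l X}$, so that the two hedgehogs $\hX$ and $\l\hX$ are simply the hedgehogs over the sets $X$ and $\l X$, respectively. Then, by Proposition~\ref{prop:ezhdist}, the isometry $\hX\approx\widehat{\l X}$ is equivalent to the set-theoretic equality $X=\l X$. Hence condition (1) translates exactly to: $X=\l X$ for every $\l\in\R_+$.

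Next I would verify the resulting equivalence directly. The direction (2)$\Rightarrow$(1) is immediate: multiplication by any $\l>0$ is a bijection $\R_+\to\R_+$, so $\l\R_+=\R_+$. For (1)$\Rightarrow$(2), fix any $x_0\in X$ (possible since $X$ is non-empty) and take an arbitrary $y\in\R_+$; setting $\l=y/x_0\in\R_+$ and using $\l X=X$, we obtain $y=\l x_0\in\l X=X$. This yields $\R_+\sse X$, and combined with the hypothesis $X\sse\R_+$ gives $X=\R_+$.

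There is no real obstacle: the content of the corollary is entirely contained in Propositions~\ref{prop:propert} and~\ref{prop:ezhdist}, and what remains is a one-line argument. The only subtlety worth flagging is that Proposition~\ref{prop:ezhdist} compares sets \emph{with multiplicities}; since the corollary is stated without mention of multiplicities, it implicitly invokes the paper's default convention that every geometric point carries multiplicity $1$, under which the comparison reduces to ordinary set equality.
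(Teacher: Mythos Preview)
Your proof is correct and follows exactly the paper's approach: invoke Proposition~\ref{prop:propert}(\ref{enum:propert:2}) and Proposition~\ref{prop:ezhdist} to reduce the statement to the equivalence of ``$X=\l X$ for all $\l\in\R_+$'' with ``$X=\R_+$''. The paper simply asserts this last equivalence without spelling out the one-line argument you provide, and it does not mention the multiplicity convention, but otherwise the proofs are identical.
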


\begin{proof}
According to the property~(\ref{enum:propert:2}) of Proposition~\ref{prop:propert} and Proposition~\ref{prop:ezhdist}, we have equality $X=\l X$ for every $\l \in \R_+$. The last condition is equivalent to $X=\R_+$.
\end{proof}

\begin{cor}\label{cor:isometryH}
For any non-trivial subgroup $H\sse\R_+$ the equalities
$$
\St\hH=\St_\R H=\St H=H
$$
are valid.
\end{cor}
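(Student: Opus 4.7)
The plan is to establish the three equalities $\St_\R H = H$, $\St\hH = \St_\R H$, and $\St H = \St_\R H$ in turn.

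The first, $\St_\R H = H$, is immediate from the subgroup property: if $\lambda \in H$ then $\lambda H \sse H$ and $\lambda^{-1}H \sse H$ together yield $\lambda H = H$, and conversely $\lambda H = H$ combined with $1 \in H$ forces $\lambda = \lambda\cdot 1 \in H$. Next, the identity $\St\hH = \St_\R H$ follows at once from the earlier hedgehog results: by property~(\ref{enum:propert:2}) of Proposition~\ref{prop:propert} we have $\lambda\hH = \widehat{\lambda H}$, and by Proposition~\ref{prop:ezhdist} this hedgehog is isometric to $\hH$ if and only if $\lambda H = H$, i.e.\ $\lambda \in \St_\R H$.

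The main obstacle will be the remaining equality $\St H = \St_\R H$. The inclusion $\St_\R H \sse \St H$ is trivial, since equal subsets of $\R$ carry identical induced metrics and are therefore isometric. For the reverse inclusion, suppose $\lambda \in \St H$ and fix a bijection $\phi\:H \to H$ witnessing the isometry, so that $|\phi(x)-\phi(y)| = \lambda|x-y|$ for all $x,y \in H$. Because $H$ is a non-trivial subgroup of the torsion-free multiplicative group $\R_+$ it is infinite; moreover, picking any $h \in H\setminus\{1\}$ and considering the iterates $h^{\pm n}$ shows that $\inf H = 0$ and $\sup H = +\infty$ in $\R$. Since $H$ then contains at least three points, the standard triangle-equality argument forces $\phi$ to be monotone, so $\phi$ extends to an affine $\lambda$-similarity of $\R$ of the form $\phi(x) = \epsilon\lambda x + c$ with $\epsilon \in \{\pm 1\}$ and $c \in \R$.

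The case $\epsilon = -1$ is impossible, since it would force $\phi(H) \sse (-\infty, c)$, contradicting $\sup\phi(H) = \sup H = +\infty$; hence $\epsilon = 1$, and then matching $\inf\phi(H) = c$ against $\inf H = 0$ gives $c = 0$. Therefore $\lambda H = \phi(H) = H$, so $\lambda \in \St_\R H = H$, which closes the chain of equalities.
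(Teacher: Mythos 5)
Your argument is correct. For the two equalities $\St_\R H=H$ and $\St\hH=\St_\R H$ you use exactly the ingredients the paper intends (the subgroup property of $H$, property~(\ref{enum:propert:2}) of Proposition~\ref{prop:propert}, and Proposition~\ref{prop:ezhdist}); the paper in fact states the corollary without any written proof, treating it as an immediate consequence of those results. The only part that genuinely needs an argument beyond them is the inclusion $\St H\sse H$, which the paper leaves implicit and you supply in full: a map $\phi\:H\to H$ with $|\phi(x)-\phi(y)|=\l|x-y|$ preserves the additive triangle equalities of collinear triples, hence is monotone and of the form $\phi(x)=\epsilon\l x+c$ on $H$; since a non-trivial subgroup of $\R_+$ has $\inf H=0$ and $\sup H=+\infty$, the reflection is excluded and $c=0$, giving $\l H=H$. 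This filling-in is sound (the betweenness/monotonicity step is legitimate because $H$ is infinite), so your write-up is a complete proof of the statement, slightly more detailed than the paper's implicit one.
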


\begin{prop}\label{prop:2ezha0dist}
Let a subset $X\sse\R_+$ and a number $M>0$ be such that $\bigl|[2M,\infty)\cap X\bigr|\ge2$. Let also discrete hedgehogs $\hX$ and $\hY$ \(for some subset $Y\sse\R_+$\/\) lie\/ \(isometrically\/\) in a metric space $Z$ so that $\hX\ss U_M(\hY)\ss Z$. Then
$$
|0_X0_Y|<4M\ \ \text{and}\ \ [5M,\infty)\cap X\ss U_M(Y)\sse\R_+.
$$
\end{prop}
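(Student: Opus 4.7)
The plan is to extract information about $0_Y$ from the three points $0_X,x_1,x_2\in\hX$, where $x_1,x_2$ are the two witnesses of $\bigl|[2M,\infty)\cap X\bigr|\ge2$. Pick $M$-approximants $q_0,q_1,q_2\in\hY$ with $|0_Xq_0|<M$ and $|x_iq_i|<M$. Since $|0_Xx_i|_{\hX}=x_i\ge 2M$ and $|x_1x_2|_{\hX}=x_1+x_2\ge 4M$, any coincidence $q_i=q_j$ would contradict the triangle inequality in $Z$, so $q_0,q_1,q_2$ are pairwise distinct in $\hY$.

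For the first claim, if $q_0=0_Y$ the bound $|0_X0_Y|<M<4M$ is immediate. Otherwise $q_0\in Y$ has coordinate $b>0$ and it suffices to prove $b<3M$, since then $|0_X0_Y|\le|0_Xq_0|+|q_00_Y|<M+b<4M$. In the main subcase $q_1,q_2\in Y$ the three spine points $q_0,q_1,q_2$ are distinct, so $|q_0q_i|_{\hY}=b+q_i$ and $|q_1q_2|_{\hY}=q_1+q_2$. The triangle inequality in $Z$ gives $b+q_i<x_i+2M$ for $i=1,2$ and $q_1+q_2>x_1+x_2-2M$; combining these yields $2b<6M$, hence $b<3M$. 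If instead one of $q_1,q_2$, say $q_1$, equals $0_Y$, then chaining $x_1,0_Y,q_2,x_2$ in $Z$ shows $q_2>x_1+x_2-2M$, which together with $b+q_2<x_2+2M$ forces $b<4M-x_1\le 2M$.

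For the second claim, take $x\in[5M,\infty)\cap X$ and any $y'\in\hY$ with $|xy'|<M$. By the first claim, $|x0_Y|\ge|x0_X|-|0_X0_Y|>5M-4M=M$, so $y'\ne 0_Y$; hence $y'\in Y$. Under the natural identification of $Y\sse\R_+$ with its copy inside $\hY\sse Z$, this gives the required inclusion $x\in U_M(Y)$.

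The main obstacle is the first claim: one must carefully enumerate the cases by which of $q_0,q_1,q_2$ lies at the center $0_Y$ and which in $Y$, then use the additive hedgehog formula in conjunction with triangle inequalities in $Z$ to turn the embedding data into a linear bound on the coordinate $b$. The key observation driving the whole argument is that three points of $\hX$ mutually separated by more than $2M$ cannot share an $M$-approximant in $\hY$, which guarantees enough distinct hedgehog spine points for the additive rule to apply.
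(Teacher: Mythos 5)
Your argument is correct and is essentially the paper's own proof: approximate $0_X,x_1,x_2$ by points of $\hY$, show the three approximants are pairwise distinct, and combine the additive hedgehog distance with triangle inequalities in $Z$ to bound the coordinate of the approximant of $0_X$ by $3M$, whence $|0_X0_Y|<4M$, with the second claim obtained exactly as in the paper by ruling out $0_Y$ as an $M$-approximant of any $x\ge5M$. Your explicit case split over which approximants coincide with $0_Y$ is handled in the paper uniformly (the center is treated as having coordinate $0$, so $|yy'|=y+y'$ for all distinct points of $\hY$), and your reading of $U_M(Y)$ as the neighborhood in $Z$ of the embedded copy of $Y$ is precisely what the paper's proof establishes; these are only cosmetic differences.
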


\begin{proof}
Take two points $x_1,x_2\in X$ such that $x_1\ge2M$ and $x_2\ge2M$. There are points $y_1,y_2\in\hY$ such that $|x_1y_1|<M$ and $|x_2y_2|<M$. If $y_1=y_2$, then $x_1+x_2=|x_1x_2|\le|x_1y_1|+|y_2x_2|<2M$. So $y_1\ne y_2$, therefore
$$
y_1+y_2=|y_1y_2|\le|y_1x_1|+|x_1x_2|+|x_2y_2|<x_1+x_2+2M.
$$
Since $x_1\ne x_2$, the inequality $x_1+x_2<y_1+y_2+2M$ is similarly obtained.

There is a point $y_0$ such that $|0_Xy_0|<M$. It is clear that $y_i\ne y_0$, $i=1,2$, (otherwise $x_i=|x_i0_X|\le|x_iy_i|+|y_00_X|<2M$). So $y_0+y_i=|y_0y_i|\le|y_00_X|+|0_Xx_i|+|x_iy_i|<x_i+2M$. Adding the two inequalities and continuing with the previous inequality, we get
$$
2y_0+y_1+y_2<x_1+x_2+4M<y_1+y_2+6M.
$$
Therefore $y_0<3M$.

Then $|0_X0_Y|\le|0_Xy_0|+|y_00_Y|<4M$. Let now $x_1\ge5M$. If we assume that $y_1=0_Y$, then $x_1=|x_10_X|\le|x_10_Y|+|0_Y0_X|<5M$. Therefore $y_1\in Y$.
\end{proof}

\begin{cor}\label{cor:ezh0dist}
If $d_{GH}(\hX,\hY)=0$ for two subsets $X,Y\sse\R_+$ then $\bX=\bY$.
\end{cor}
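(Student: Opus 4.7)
My plan is to apply Proposition~\ref{prop:2ezha0dist} directly; since the hypothesis $d_{GH}(\hX,\hY)=0$ is symmetric in $X$ and $Y$, I only need to prove $\bX\sse\bY$, with the reverse inclusion following by swapping the roles of $X$ and $Y$.

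The degenerate case $|X|=1$, say $X=\{x_0\}$, I dispose of first: then $\hX$ is a two-point space of density $2$, so Proposition~\ref{prop:density} forces $\hY$ to have density $2$ as well, hence $|Y|=1$, say $Y=\{y_0\}$. Two-point spaces of diameters $x_0$ and $y_0$ sit at Gromov--Hausdorff distance $|x_0-y_0|/2$, so the hypothesis forces $x_0=y_0$ and thus $X=Y$. From now on I assume $|X|,|Y|\ge 2$.

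In the main case I fix $x\in X$ and $\e>0$, and choose $M>0$ small enough that $5M<\e$, $x\ge 5M$ and $|X\cap[2M,\infty)|\ge 2$; the last condition is available because $X$ has at least two positive elements. Since $d_{GH}(\hX,\hY)=0$, the definition of the Gromov--Hausdorff distance (or Corollary~\ref{cor:CollectiveIsoEmbed} for an explicit construction) produces a metric space $Z$ into which $\hX$ and $\hY$ embed isometrically with $d_H(\hX,\hY)<M$, so that each hedgehog lies in the $M$-neighborhood of the other. Proposition~\ref{prop:2ezha0dist} then delivers $|0_X0_Y|_Z<4M$ and, thanks to $x\ge 5M$, an honest point $y\in Y$ (not the apex $0_Y$) with $d_Z(x,y)<M$.

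The remaining step is to convert the ambient estimate $d_Z(x,y)<M$ into a real-line estimate $|x-y|<\e$. This is a one-line triangle computation in $Z$ using $d_Z(x,0_X)=x$, $d_Z(y,0_Y)=y$ and $d_Z(0_X,0_Y)<4M$: one obtains $y\le d_Z(y,x)+d_Z(x,0_X)+d_Z(0_X,0_Y)<M+x+4M$ together with the mirror inequality, whence $|x-y|<5M<\e$. Hence $x\in\bY$; as $x\in X$ and $\e>0$ were arbitrary, $X\sse\bY$ and therefore $\bX\sse\bY$. I foresee no real obstacle, because all of the geometric work --- transferring small Hausdorff proximity of two hedgehogs into real-line proximity of their supports --- has already been absorbed into Proposition~\ref{prop:2ezha0dist}.
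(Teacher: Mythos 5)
Your argument is correct and follows essentially the same route as the paper: the paper's proof simply invokes Proposition~\ref{prop:2ezha0dist} for every $r>0$ to get $[5r,\infty)\cap X\ss U_r(Y)$, concludes $X\ss\bY$, and finishes by symmetry. Your additional steps --- disposing of the degenerate case $|X|=1$ (where the proposition's hypothesis cannot be met), exhibiting the ambient space $Z$ with $d_H(\hX,\hY)<M$, and the triangle computation converting $d_Z(x,y)<M$ into the real-line bound $|x-y|<5M$ --- are harmless elaborations of details the paper leaves implicit, and the $5M$ loss does not affect the conclusion since $M$ is arbitrary.
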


\begin{proof}
According to Proposition~\ref{prop:2ezha0dist}, we have $[5r,\infty)\cap X\ss U_r(Y)$ for every $r>0$. Hence $X\ss\bY$. Similarly $Y\ss\bX$.
\end{proof}

\begin{thm}\label{thm:ezh0disthatX}
If $d_{GH}(T,\hX)=0$ for a complete metric space $T$ and a subset $X\sse\R_+$, then for some subset\/ \(with multiplicities\/\) $Y\sse\bX$ the space $T$ is isometric to the discrete hedgehog $\hY$.
\end{thm}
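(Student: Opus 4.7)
If $T$ happens to be isometric to $\widehat X$ there is nothing to prove, so assume the contrary. Apply Corollary~\ref{cor:CollectiveIsoEmbed} with $T_n := \widehat X$ and $M_n := 1/n$ to obtain an ambient metric space $Z$ that isometrically contains $T$ together with a sequence of hedgehogs $\widehat X_n$, each isometric to $\widehat X$, with $d_H(T,\widehat X_n)<1/n$; write $0_n$ for the origin of $\widehat X_n$. The goal is then to distill a hedgehog structure on $T$ from the near-hedgehog data supplied by the $\widehat X_n$.

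The key step is to locate a point $t_0\in T$ that will play the role of the center of the target hedgehog. When $|X|=1$, a pigeonhole argument on correspondences of small distortion forces $T$ itself to be a two-point space at distance $x_1$, hence isometric to $\widehat X$, and we are done. So assume $|X|\ge 2$ and pick $\e>0$ with $|[\e,\infty)\cap X|\ge 2$. For $n,m$ large enough that $2(1/n+1/m)<\e$, the triangle inequality in $Z$ gives $d_H(\widehat X_n,\widehat X_m)<1/n+1/m$, so Proposition~\ref{prop:2ezha0dist} applies with $M := 1/n+1/m$ and yields $|0_n\,0_m|<4(1/n+1/m)$. Thus $(0_n)$ is Cauchy in $Z$; choosing $t_n\in T$ with $|t_n\,0_n|<1/n$ produces a Cauchy sequence in the complete space $T$, whose limit $t_0\in T$ also satisfies $0_n\to t_0$.

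For each $t\in T\setminus\{t_0\}$ set $y(t) := |t\,t_0|$, and pick $y_n\in\widehat X_n$ with $|t\,y_n|<1/n$; then $|y_n\,0_n|\to y(t)$. Since $y(t)>0$, for large $n$ the approximant $y_n$ differs from $0_n$ and thus corresponds to some element of $X$ whose hedgehog coordinate equals $|y_n\,0_n|$, so $y(t)\in\overline X$. For any two distinct $t,t'\in T\setminus\{t_0\}$ with approximants $y_n,y_n'$, the estimate $|t\,t'|\le 2/n+|y_n\,y_n'|$ forces $y_n\ne y_n'$ (and both $\ne 0_n$) for large $n$, and the hedgehog identity $|y_n\,y_n'|=|y_n\,0_n|+|y_n'\,0_n|$ then passes to the limit to give $|t\,t'|=y(t)+y(t')$. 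Setting $Y := y(T\setminus\{t_0\})\subseteq\overline X$ with multiplicity of each $y_0\in Y$ equal to the cardinality of $y^{-1}(y_0)$, the map sending $0_Y\mapsto t_0$ and each point of the multiplicity fiber over $y_0$ to a distinct preimage in $y^{-1}(y_0)$ is the desired isometry $\widehat Y\to T$.

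The crux of the proof is the collapse of the origin sequence $(0_n)$ to a single point of $T$; Proposition~\ref{prop:2ezha0dist} is tailored exactly for this, and the case $|X|=1$ must be handled separately because that proposition needs two points of $X$ bounded away from zero. All subsequent verifications are routine limit computations driven by the hedgehog identity.
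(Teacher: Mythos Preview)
Your proof is correct and follows essentially the same route as the paper's: embed $T$ and a sequence of copies of $\widehat X$ into a common ambient space via Corollary~\ref{cor:CollectiveIsoEmbed}, use Proposition~\ref{prop:2ezha0dist} to force the hedgehog centers $0_n$ to form a Cauchy sequence, let the completeness of $T$ produce a limit point $t_0$, and then read off the hedgehog structure on $T$ by taking distances to $t_0$ and passing the hedgehog identity through the approximations. The only cosmetic differences are that the paper disposes of all finite $X$ at once (invoking the standard fact that zero Gromov--Hausdorff distance to a finite space forces isometry), whereas you single out $|X|=1$ and let $|X|\ge 2$ run through the main argument; and the paper spells out the two-sided estimate for $|t_1t_2|$ via explicit inequality chains while you pass directly to the limit in $|y_n\,y_n'|=|y_n\,0_n|+|y_n'\,0_n|$.
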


\begin{proof}
If a metric space is on the zero distance from a finite metric space, then the both spaces are evidently isometric. Thus, without loss of generality we assume that the set $X$ is infinite. Further, if $T$ is isometric to $\hX$, we can put $\hY=\hX$ to complete the proof. Thus, we suppose that $T$ and $\hX$ are not isometric.

Take a sequence of positive numbers $\e_n>0$ such that $\e_n\to0$. Take the space $Z$ from Corollary~\ref{cor:CollectiveIsoEmbed} for $T$, $T_n=\hX$, and $M_n=\e_n$. By the triangle inequality, we have $d_H(T_m,T_n)<\e_m+\e_n$. To apply Proposition~\ref{prop:2ezha0dist}, we assume without loss of generality that the sequence $R_n$ was chosen so that each ray $[2\e_n,\infty)$ contains at least two points of $X$ (recall that the $X$ consists of infinitely many positive real numbers). Thus, by this proposition, we have $|0_m0_n|<4(\e_m+\e_n)$, where $0_i$ denotes the center of the hedgehog $T_i$. For every $n$ we choose a point $t_n\in T$ such that $|t_n0_n|<\e_n$. Then $|t_mt_n|<5(\e_m+\e_n)$. Thus, the sequence of points $t_n$ is fundamental. It follows from the completeness of the space $T$ that this sequence has the limit $0_T\in T$.

Consider now the set of distances
$$
Y=\Bigl\{\bigl(|0_Tt|,t\bigr):t\in T,t\ne 0_T\Bigr\}\sse\R_+\x T.
$$
Let $t\in T$, $t\ne 0_T$, be an arbitrary point. Take an arbitrary number $\e>0$ such that $2\e\le|t0_T|$. Consider a set $T_n$ such that $d_H(T,T_n)<\e$ and $|0_T0_n|<\e$. There is a point $t_n\in T_n$ such that $|tt_n|<\e$. It is clear that $t_n\ne 0_n$ --- otherwise $2\e\le|t0_T|\le|t0_n|+|0_n0_T|<2\e$. Then
\begin{gather}\label{eq:4}
\left\{
\arraycolsep=1.5pt
\begin{array}{rcccl}
|t0_T|&\le&|tt_n|+|t_n0_n|+|0_n0_T|&<&t_n+2\e,\\[5pt]
t_n=|t_n0_n|&\le&|t_nt|+|t0_T|+|0_T0_n|&<&|t0_T|+2\e.
\end{array}
\right.
\end{gather}
Thus, we have shown that for any number $|0_Tt|\in Y$ and any $\e>0$, there exists a number $t_n\in X$ such that
\begin{equation}\label{eq:5}
-2\e<|0_Tt|-t_n<2\e.
\end{equation}
The latter means that $Y\sse\bX$.

The mapping $h\:T\to\hY$ arises naturally. Let us show that $h$ is an isometry. The surjectivity and injectivity of $h$ is obvious. It is also obvious that $h$ preserves distances from the point $0_T$. It remains to show that the metric in $T$ is intrinsic, i.e., for any two points $t_1,t_2\in T\setminus 0_T$ the equality $|t_1t_2|=|0_Tt_1|+|0_Tt_2|$ holds.

The inequality $|t_1t_2|\le|0_Tt_1|+|0_Tt_2|$ is an obvious consequence of the triangle inequality. The reverse inequality is obtained by simply repeating the reasoning of Proposition~\ref{prop:2ezha0dist} proof. Take an arbitrary $\e>0$ with $2\e<\min\bigl\{|0_Tt_1|,|0_Tt_2|,|t_1t_2|\bigr\}$. Fix any index $n$ such that $d_H(T,T_n)<\e$ and $|0_n0_T|<\e$. Choose points $x_i\in T_n$, $i=1,2$, for which $|t_ix_i|<\e$. Similarly to the proof of~(\ref{eq:4}), we obtain
$$
|t_i0_T|\le|t_ix_i|+|x_i0_n|+|0_n0_T|<x_i+2\e,\ \ i=1,2.
$$
Adding two inequalities, we get
$$
|t_10_T|+|t_20_T|<x_1+x_2+4\e=|x_1x_2|+4\e.
$$
According to the triangle inequality,
$$
|x_1x_2|\le|x_1t_1|+|t_1t_2|+|t_2x_2|<|t_1t_2|+2\e.
$$
So
$$
|t_10_T|+|t_20_T|-6\e<|x_1x_2|-2\e<|t_1t_2|.
$$
Since $\e$ can be chosen arbitrary small, we obtain the reverse inequality sought for, together with the required equality $|t_1t_2|=|0_Tt_1|+|0_Tt_2|$.
\end{proof}

\begin{rmk}
Proposition~\ref{prop:2ezha0dist} and Theorem~\ref{thm:ezh0disthatX} are not true for general complete spaces with one non-isolated point. The example below shows that a countable discrete complete bounded metric space can be at zero distance from a countable locally compact complete metric space with exactly one non-isolated point.
\end{rmk}

\begin{examp}\label{examp:Tuzhilin}
Consider the space $\ell_1$. Enumerate the coordinate lines (the basic vectors) by the positive integers, together with the $\infty$ symbol. The point on the $n$th coordinate line ($n=1,\ldots,\infty $) with the coordinate $x$ we denote by $(n;x)$. Consider the following $n$-point set on the $n$th coordinate line:
$$
X_n=\bigl\{(n,\,1+1/k):1\le k\le n\bigr\}.
$$
Let us form two spaces
$$
X=\bigcup_{1\le n<\infty}X_n\ \ \text{and}\ \ Y=X\cup X_\infty.
$$
\end{examp}

\begin{prop}
The $X$ and $Y$ are countable complete bounded metric spaces. The space $X$ is discrete. The space $Y$ has exactly one non-isolated point, namely, $(\infty,1)$. It holds $d_{GH}(X,Y)=0$.
\end{prop}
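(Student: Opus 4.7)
The plan is to verify the elementary assertions directly, then construct explicit correspondences between $X$ and $Y$ whose distortions tend to zero. Countability is immediate since each $X_n$ is finite and there are countably many lines, so $X$ and $Y$ are countable. Boundedness holds because every point constructed has $\ell_1$-norm at most $2$, so $\diam Y\le 4$. The decisive elementary observation is that two points on distinct coordinate lines have $\ell_1$-distance at least $2$ (each has a coordinate $\ge 1$): this forces any Cauchy sequence in $X$ or $Y$ to eventually lie on a single coordinate line, and on such a line convergence is trivial (inside a finite $X_n$ the sequence is eventually constant, and on $X_\infty$ the coordinates form a Cauchy sequence in the closed set $\{1\}\cup\{1+1/k:k\ge 1\}\subset\R$). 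Discreteness of $X$ follows because consecutive coordinates on a single line differ by $1/(k(k-1))>0$ and cross-line distances are $\ge 2$; the same argument isolates every point of $Y$ except $(\infty,1)$, which is the limit of $(\infty,1+1/k)$ as $k\to\infty$ and is therefore the unique non-isolated point.

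For $d_{GH}(X,Y)=0$ I will, for each integer $N\ge 2$, build a correspondence $R_N\in\cR(X,Y)$ with $\dis R_N\le 2/N$ and then apply Theorem~\ref{thm:dis}. The idea is a ``shift'' that frees one line of $X$ to absorb all of $X_\infty$. Explicitly, $R_N$ consists of the pairs $((n,1+1/k),(n,1+1/k))$ for $n<N$ and $1\le k\le n$; the pairs $((N,1+1/k),(\infty,1+1/k))$ for $1\le k\le N-1$; the pairs $((N,1+1/N),(\infty,1+1/k))$ for every $k\ge N$ together with $((N,1+1/N),(\infty,1))$; and, for every $n>N$, the pairs $((n,1+1/k),(n-1,1+1/k))$ for $1\le k\le n-1$ together with $((n,1+1/n),(n-1,1+1/(n-1)))$. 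The induced map $\sigma$ on line indices, $\sigma(n)=n$ for $n<N$, $\sigma(N)=\infty$, $\sigma(n)=n-1$ for $n>N$, is a bijection onto the line indices of $Y$, so $R_N$ is a correspondence; and the coordinate of every $Y$-partner differs from that of its $X$-mate by at most $1/N$.

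For the distortion, take two pairs $(p_i,q_i)\in R_N$ whose $X$-points lie on lines $n_i$ with coordinates $x_i$, and whose $Y$-partners lie on lines $\sigma(n_i)$ with coordinates $y_i$, so $|x_i-y_i|\le 1/N$. If $n_1\ne n_2$, then $\sigma(n_1)\ne\sigma(n_2)$, hence $|p_1p_2|=x_1+x_2$ and $|q_1q_2|=y_1+y_2$, giving $\bigl||p_1p_2|-|q_1q_2|\bigr|\le 2/N$. If $n_1=n_2$, then $\sigma(n_1)=\sigma(n_2)$ (even in the multi-valued case of the $(N,1+1/N)$-pairs, whose $Y$-partners all lie on line $\infty$), both distances become $|x_1-x_2|$ and $|y_1-y_2|$, and the reverse triangle inequality again yields distortion $\le 2/N$. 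Thus $\dis R_N\le 2/N$ and $d_{GH}(X,Y)\le 1/N$ for every $N$, so $d_{GH}(X,Y)=0$. The main obstacle, and the reason for the shift, is that any correspondence identifying a point $(N,1+1/k)\in X$ both with its twin in $Y$ \emph{and} with a point of $X_\infty\subset Y$ produces two pairs with $|p_1p_2|=0$ but $|q_1q_2|\ge 2$; the shift avoids this by covering line $N$ of $Y$ using line $N+1$ of $X$ instead, freeing line $N$ of $X$ to match $X_\infty$ entirely.
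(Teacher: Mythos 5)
Your proof is correct and follows essentially the same route as the paper: the key idea --- shifting the lines of index $\ge N$ by one so that one line of $X$ is freed to absorb all of $X_\infty$ --- is exactly the paper's map $h_m$, only you phrase it as an explicit correspondence with distortion $\le 2/N$ and invoke Theorem~\ref{thm:dis}, whereas the paper realizes the same shift as an isometric embedding $h_m\colon Y\to\ell_1$ and computes $d_H\bigl(X,h_m(Y)\bigr)=1/m$ directly. Your explicit verification of countability, boundedness, completeness, discreteness and the uniqueness of the non-isolated point $(\infty,1)$ is a useful addition, since the paper states these elementary claims without detailed proof.
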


\begin{proof}
If we place the both $X_n$ and $X_m$ on the same coordinate line, we get $d_H(\tX_n,\tX_m)=\bigl|1/n-1/m\bigr|$. Now, for each $m<\infty$, let us construct an isometric mapping $h_m\:Y\to\ell_1$ such that $d_H\bigl(X,h_m(Y)\bigr)=1/m$. The mapping $h_m$ is defined by the formula
$$
h_m\bigl((n,x)\bigr)=
\begin{cases}
(n,x)&\text{for $n<m$};\\
(n+1,x)&\text{for $m\le n<\infty$};\\
(m,x)&\text{for $n=\infty$}.
\end{cases}
$$
The spaces $X$ and $Y$ are of the hedgehog type, i.e., they have a center and needles. In contrast to the discrete hedgehog, not only the ends of the needles are considered, but also some intermediate points on the needles. The distance in them is intrinsic, i.e., for points on different needles it is measured through the center --- the zero point. The mapping $h_m$ is an isometry on each needle, therefore it is a global isometry.
\end{proof}

%%%%%%%%%%%%%%%%%%%%%%%%%%%%%%
\section{Cloud center structure}
%%%%%%%%%%%%%%%%%%%%%%%%%%%%%%

Now we will show that the centers of different clouds can have different properties. We illustrate this difference from two sides. On the one hand, we show that centers can have different cardinalities. On the other hand, we show that the cloud center elements can have different degrees of homogeneity, i.e., their stabilizers in the isometric class can be different. The most extreme case gives the cloud $[\D_1]$ of bounded spaces: $Z[\D_1]=\{\D_1\}$, $\St\D_1=\St[\D_1]=\R_+$. The clouds $\bigl[\widehat{\Q_+}\bigr]$ and $\bigl[\widehat{\R_+}\bigr]$ give examples of other properties of the cloud center and isometric properties of spaces from the center. In~\cite{BB22T} it is shown that the clouds $\bigl[[0,\infty)\bigr]$ and $\bigl[\widehat{X_p}\bigr]$ are also interesting examples of different properties of centers. For example, among the five mentioned examples, only for the last cloud the stabilizer is not the entire group of similarities $\R_+$.

\begin{cor}\label{cor:ezh0distQ}
For a subset\/ \(with multiplicities\/\) $X\sse\R_+$ the following properties are equivalent\/\rom:
\begin{enumerate}
\item\label{enum:ezh0distQ:1} $d_{GH}\bigl(\hX,\widehat{\Q_+}\bigr)=0$.
\item\label{enum:ezh0distQ:2} The set $X$ is everywhere dense in $\R_+$, is countable and multiplicity of any point is at most $\aleph_0$.
\end{enumerate}
\end{cor}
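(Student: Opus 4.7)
The plan is to treat the two directions separately, using earlier hedgehog facts for $(\ref{enum:ezh0distQ:1})\Rightarrow(\ref{enum:ezh0distQ:2})$ and a back-and-forth matching argument for the converse.

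For the forward direction I would combine three existing results. Corollary~\ref{cor:ezh0dist} applied to the underlying geometric point sets of $\hX$ and $\widehat{\Q_+}$ gives $\bX=\overline{\Q_+}$, so $X$ is everywhere dense in $\R_+$. Proposition~\ref{prop:density} then yields $d(\hX)=d(\widehat{\Q_+})=\aleph_0$, and Proposition~\ref{prop:propert}(\ref{enum:propert:5}) rewrites this as $|A|+1=\aleph_0$, where $A$ is the set of non-center points of $\hX$ counted with multiplicities. Hence $|A|\le\aleph_0$, which forces the underlying geometric set of $X$ to be countable and every multiplicity to be at most $\aleph_0$.

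For the converse, the idea is to build, for every $\e>0$, a correspondence $R_\e\in\cR(\hX,\widehat{\Q_+})$ with $\dis R_\e<2\e$; Theorem~\ref{thm:dis} then delivers $d_{GH}(\hX,\widehat{\Q_+})=0$. I would pair the centers $(0_X,0_{\Q_+})\in R_\e$ and match the remaining points by a bijection $\phi\:A\to\Q_+$ satisfying $|\phi(a)-a^*|<\e$ for every $a\in A$, where $a^*\in\R_+$ denotes the coordinate of $a$. Because $X$ is dense and every multiplicity is countable, $A$ is countably infinite; enumerate $A=\{a_i\}_{i\in\N}$ and $\Q_+=\{q_j\}_{j\in\N}$ and build $\phi$ step by step, alternating. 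At odd step $2n-1$ I pair the first unmatched $a_i$ with some still-unmatched rational in $\Q_+\cap(a_i^*-\e,a_i^*+\e)$, available since $\Q_+$ is dense in $\R_+$ while only finitely many rationals have been used so far. At even step $2n$ I pair the first unmatched $q_j$ with some unmatched $a\in A$ whose coordinate lies in $(q_j-\e,q_j+\e)$, available because the density of $X$ places infinitely many geometric points, and hence infinitely many elements of $A$, in that interval.

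Setting $R_\e=\{(0_X,0_{\Q_+})\}\cup\{(a,\phi(a)):a\in A\}$, the distortion estimate is direct: for distinct $a,a'\in A$ the bijectivity of $\phi$ ensures $\phi(a)\ne\phi(a')$, so
$$
\bigl||aa'|-|\phi(a)\phi(a')|\bigr|=\bigl|(a^*+a'^*)-(\phi(a)+\phi(a'))\bigr|<2\e,
$$
and any center/non-center pair contributes only $|a^*-\phi(a)|<\e$. Thus $\dis R_\e<2\e$ and the converse implication follows. I expect the main subtlety to be the bookkeeping of the back-and-forth: bijectivity of $\phi$ is essential, because collapsing two distinct non-center points of $\hX$ onto the same rational would set a zero image-distance against a possibly large $a^*+a'^*$ and destroy the distortion bound.
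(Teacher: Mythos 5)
Your proposal is correct and follows essentially the same route as the paper: the forward direction uses exactly the same ingredients (Corollary~\ref{cor:ezh0dist} for density, Proposition~\ref{prop:density} together with Proposition~\ref{prop:propert}(\ref{enum:propert:5}) for countability with multiplicities), and the converse rests on the same key idea of an $\e$-close bijection between $X$ (with multiplicities) and $\Q_+$ whose induced correspondence, with centers matched, has distortion at most $2\e$. The only difference is cosmetic: the paper obtains the bijection by partitioning $\R_+$ into the intervals $\bigl((n-1)\e,n\e\bigr]$ and matching the countably infinite sets $X\cap\bigl((n-1)\e,n\e\bigr]$ and $\Q\cap\bigl((n-1)\e,n\e\bigr]$ within each piece, whereas you build it by a back-and-forth enumeration; both yield the same bound and conclusion via Theorem~\ref{thm:dis}.
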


\begin{proof}
$(\ref{enum:ezh0distQ:1})\Rightarrow(\ref{enum:ezh0distQ:2})$. According to Proposition~\ref{prop:density} and equality~(\ref{enum:propert:5}) of Proposition~\ref{prop:propert}, we have that, taking into account multiplicities of points, the set $X$ is countable.

According to Corollary~\ref{cor:ezh0dist}, we have $\bX=\overline{\Q_+}=\R_+$.

$(\ref{enum:ezh0distQ:2})\Rightarrow(\ref{enum:ezh0distQ:1})$. We fix $\e>0$. Let us partition the set of all positive numbers $\R_+$ into the countable number of subsets of diameter at most $\e$:
$$
S_n=\bigl((n-1)\,\e,\,n\e\bigr],\ \ n=1,\ldots.
$$
For every $n$, we establish a one-to-one correspondence between the countable sets $X\cap S_n$ and $\Q\cap S_n$. The constructed one-to-one correspondence of $X$ to $\Q_+$ generates a correspondence $R$ with $\dis R\le2\e$. Application of Theorem~\ref{thm:dis} completes the proof.
\end{proof}

\begin{cor}
There is equality $\St[\widehat{\Q_+}]=\R_+$.
\end{cor}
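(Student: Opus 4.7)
The plan is to use Corollary~\ref{cor:ezh0distQ} directly: we show that for every $\lambda\in\R_+$, the scaled hedgehog $\lambda\widehat{\Q_+}$ is actually at zero distance from $\widehat{\Q_+}$, which is much stronger than being at finite distance.

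First, I would apply property~(\ref{enum:propert:2}) of Proposition~\ref{prop:propert} to rewrite $\lambda\widehat{\Q_+}=\widehat{\lambda\Q_+}$, so the scaling of the hedgehog is itself a hedgehog, this time over the set $\lambda\Q_+\sse\R_+$. The key observation is that for any $\lambda>0$, the set $\lambda\Q_+$ is a countable dense subset of $\R_+$, with all points of multiplicity $1\le\aleph_0$. Hence the hypothesis~(\ref{enum:ezh0distQ:2}) of Corollary~\ref{cor:ezh0distQ} is satisfied by $X=\lambda\Q_+$.

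Applying the implication $(\ref{enum:ezh0distQ:2})\Rightarrow(\ref{enum:ezh0distQ:1})$ of Corollary~\ref{cor:ezh0distQ} gives
$$
d_{GH}\bigl(\widehat{\Q_+},\,\lambda\widehat{\Q_+}\bigr)=d_{GH}\bigl(\widehat{\Q_+},\,\widehat{\lambda\Q_+}\bigr)=0<\infty,
$$
so by the very definition of $\St[\widehat{\Q_+}]$ we have $\lambda\in\St[\widehat{\Q_+}]$. Since $\lambda\in\R_+$ was arbitrary, this yields $\St[\widehat{\Q_+}]=\R_+$.

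There is no real obstacle here: the corollary is a one-line consequence of the preceding Corollary~\ref{cor:ezh0distQ}, which has already done all the work by constructing an explicit $\varepsilon$-correspondence between $\Q_+$ and any countable dense subset of $\R_+$. In fact, the argument shows the stronger equality $\St_0\widehat{\Q_+}=\R_+$, of which $\St[\widehat{\Q_+}]=\R_+$ is only a weakening.
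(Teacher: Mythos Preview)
Your proof is correct and is exactly the argument the paper intends: the corollary is stated without proof immediately after Corollary~\ref{cor:ezh0distQ}, and your use of Proposition~\ref{prop:propert}\,(\ref{enum:propert:2}) together with the implication $(\ref{enum:ezh0distQ:2})\Rightarrow(\ref{enum:ezh0distQ:1})$ is precisely how it follows. Your observation that this in fact proves the stronger statement $\St_0\widehat{\Q_+}=\R_+$ is also correct and worth noting.
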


\begin{cor}
For a complete metric space $T$ the following properties are equivalent\/\rom:
\begin{enumerate}
\item\label{enum:0distQ:1} $d_{GH}\bigl(T,\widehat{\Q_+}\bigr)=0$.
\item\label{enum:0distQ:2} The space $T$ is isometric to some discrete hedgehog $\hY$ over a countable\/ \(taking into account multiplicities of points\/\) dense subset $Y\ss\R_+$.
\end{enumerate}
\end{cor}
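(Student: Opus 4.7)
The plan is to deduce this corollary by chaining together two of the already-established results: Theorem~\ref{thm:ezh0disthatX}, which says that any complete space at zero distance from a hedgehog is itself a hedgehog, and the preceding Corollary~\ref{cor:ezh0distQ}, which characterizes exactly which hedgehogs sit at zero distance from $\widehat{\Q_+}$. So essentially no fresh work is needed; the corollary is the combination of the two.

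For the implication $(\ref{enum:0distQ:1}) \Rightarrow (\ref{enum:0distQ:2})$, I would proceed as follows. Given a complete metric space $T$ with $d_{GH}(T, \widehat{\Q_+}) = 0$, apply Theorem~\ref{thm:ezh0disthatX} with the choice $X = \Q_+ \subseteq \R_+$. Since $\overline{\Q_+} = \R_+$, this yields a subset with multiplicities $Y \sse \R_+$ such that $T$ is isometric to the discrete hedgehog $\hY$. In particular $d_{GH}(\hY, \widehat{\Q_+}) = 0$, and the characterization in Corollary~\ref{cor:ezh0distQ} then forces $Y$ to be countable (taking multiplicities into account, with each multiplicity at most $\aleph_0$) and dense in $\R_+$. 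This is exactly~(\ref{enum:0distQ:2}).

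The reverse implication $(\ref{enum:0distQ:2}) \Rightarrow (\ref{enum:0distQ:1})$ is immediate: if $T$ is isometric to such a hedgehog $\hY$, then Corollary~\ref{cor:ezh0distQ} gives $d_{GH}(\hY, \widehat{\Q_+}) = 0$, and isometric spaces are at Gromov--Hausdorff distance zero, so $d_{GH}(T, \widehat{\Q_+}) = 0$.

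The main obstacle, namely reducing an \emph{arbitrary} complete metric space at zero distance from a hedgehog to a hedgehog itself, has already been handled inside Theorem~\ref{thm:ezh0disthatX} (the core steps being the extraction of a distinguished limit point $0_T$ from the centers of the approximating hedgehogs via the collective isometric embedding of Corollary~\ref{cor:CollectiveIsoEmbed}, the identification of distances from $0_T$ with points of $\overline{X}$, and the verification that the intrinsic hedgehog relation $|t_1 t_2| = |0_T t_1| + |0_T t_2|$ holds). Nothing beyond invoking these two prior results is required here.
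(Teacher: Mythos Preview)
Your proposal is correct and follows essentially the same route as the paper's own proof: invoke Theorem~\ref{thm:ezh0disthatX} to reduce $T$ to a hedgehog $\hY$ with $Y\sse\R_+$, and then use Corollary~\ref{cor:ezh0distQ} in both directions to pin down the countability and density of $Y$. The only difference is that you spell out the intermediate step $d_{GH}(\hY,\widehat{\Q_+})=0$ and recall the content of Theorem~\ref{thm:ezh0disthatX}, whereas the paper states the two citations more tersely.
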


\begin{proof}
The implication $(\ref{enum:0distQ:2})\Rightarrow(\ref{enum:0distQ:1})$ follows from Corollary~\ref{cor:ezh0distQ}.

$(\ref{enum:0distQ:1})\Rightarrow(\ref{enum:0distQ:2})$. It follows from Theorem~\ref{thm:ezh0disthatX} that $T=\hX$ for some $X\sse\R_+$. The countability and density of the set $X$ follows from the Corollary~\ref{cor:ezh0distQ}.
\end{proof}

\begin{cor}\label{cor:isometryhatQ}
If $d_{GH}\bigl(T,\widehat{\Q_+}\bigr)=0$ for a metric space $T$, then $|\St T|\le\aleph_0$.
\end{cor}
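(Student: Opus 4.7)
The plan is to reduce to the previous corollary, which identifies the complete spaces at zero Gromov--Hausdorff distance from $\widehat{\Q_+}$ as hedgehogs $\hY$ over countable dense subsets $Y\sse\R_+$, and then to observe that the stabilizer of such a hedgehog is automatically countable because it is pinned down by its action on a single geometric point.

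First I would pass from $T$ to its completion $\overline T$. Since $T$ is dense in $\overline T$ we have $d_{GH}(T,\overline T)=0$, so by the triangle inequality $d_{GH}(\overline T,\widehat{\Q_+})=0$. Any isometry $T\to\l T$ extends uniquely, by uniform continuity, to an isometry $\overline T\to\overline{\l T}=\l\overline T$, so $\St T\sse\St\overline T$, and the problem reduces to bounding $|\St\overline T|$. Applying the previous corollary to the complete space $\overline T$, there is a countable (with multiplicities) subset $Y\sse\R_+$, dense in $\R_+$ and with each multiplicity at most $\aleph_0$, such that $\overline T\approx\hY$, hence $\St\overline T=\St\hY$.

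It remains to count $\St\hY$. By Proposition~\ref{prop:propert}(\ref{enum:propert:2}) we have $\l\hY=\widehat{\l Y}$, and by Proposition~\ref{prop:ezhdist} the condition $\widehat{\l Y}\approx\hY$ is equivalent to $\l Y=Y$ as sets with multiplicities. Fix any geometric point $y_0\in Y$ (it exists since $Y$ is dense, hence nonempty). Every $\l\in\St\hY$ satisfies $\l y_0\in Y$, so $\l\in y_0^{-1}Y$, a set of cardinality at most $|Y|\le\aleph_0$. This yields $|\St T|\le|\St\hY|\le\aleph_0$.

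The only point requiring care is the reduction to the completion, since the statement does not assume $T$ complete while the previous corollary does; the extension of scaling isometries by uniform continuity handles this at once. Everything else is a clean chain of already established results, so no serious obstacle is expected.
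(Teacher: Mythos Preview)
Your proof is correct and follows essentially the same route as the paper: pass to a hedgehog $\hY$ over a countable set and bound $\St\hY=\St_\R Y$ by $|Y|$. The only difference is in the non-complete case: instead of your inclusion $\St T\sse\St\overline T$ via extension of isometries, the paper notes that since every point of $Y\ss\hY$ is isolated, a non-complete dense subspace of $\hY$ must equal $Y=\hY\setminus\{0_Y\}$ exactly, giving $\St T=\St Y=\St_\R Y$ directly.
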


\begin{proof}
Let $T$ be complete. According to Theorem~\ref{thm:ezh0disthatX}, we can assume that $T=\hX$ for some set $X\ss \R_+$. According to Corollary~\ref{cor:ezh0distQ}, the set $X$ is countable. According to Proposition~\ref{prop:ezhdist}, the equality $\St\hX=\St X=\St_\R X$ is true. Therefore $|\St X|\le\aleph_0$.

For a non-complete space $T$, let $\tT$ denotes its completion. We can assume that $\tT=\hX$. As the space $\tT$ has just one non-isolated point, then $T=\hX\setminus 0_X=X$. Then $\St T=\St X$.
\end{proof}

\begin{thm}
\label{thm:StabilhatQ}
The center $Z[\widehat{\Q_+}]$ has the following properties\/\rom:
\begin{enumerate}
\item For every countable dense subgroup $H\sse\R_+$ the hedgehog $\hH$ is in the center $Z[\widehat{\Q_+}]$ and $\St\hH=H$.
\item $|Z[\widehat{\Q_+}]|=\fc=2^{\aleph_0}$.
\end{enumerate}
\end{thm}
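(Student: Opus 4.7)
For (1), I would verify in turn each of the three conditions defining membership of $\hH$ in $Z[\widehat{\Q_+}]$. Completeness of $\hH$ is Proposition~\ref{prop:propert}(\ref{enum:propert:1}). Since $H$ is countable and dense in $\R_+$, Corollary~\ref{cor:ezh0distQ} gives $d_{GH}(\hH,\widehat{\Q_+})=0$, so in particular $\hH\in[\widehat{\Q_+}]$. For the equality $\St_0\hH=\St[\widehat{\Q_+}]=\R_+$, I would fix $\lambda\in\R_+$ and use Proposition~\ref{prop:propert}(\ref{enum:propert:2}) to write $\lambda\hH=\widehat{\lambda H}$; since $\lambda H$ remains countable and dense in $\R_+$, Corollary~\ref{cor:ezh0distQ} again yields $d_{GH}(\widehat{\lambda H},\widehat{\Q_+})=0$, and the triangle inequality forces $d_{GH}(\hH,\lambda\hH)=0$. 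Finally, $\St\hH=H$ is precisely the content of Corollary~\ref{cor:isometryH}.

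For the upper bound in (2), pick any $T\in Z[\widehat{\Q_+}]$. By Theorem~\ref{thm:ezh0disthatX} applied to $\hX=\widehat{\Q_+}$, the space $T$ is isometric to some hedgehog $\hY$ over a multiplicity-subset $Y\sse\overline{\Q_+}=\R_+$, and Corollary~\ref{cor:ezh0distQ} forces $Y$ to be countable and dense, with every multiplicity at most $\aleph_0$. Such a $Y$ is encoded by a function $\R_+\to\{0,1,\ldots,\aleph_0\}$ with countable support, and the number of these is bounded by $\fc^{\aleph_0}\cdot(\aleph_0+1)^{\aleph_0}=\fc$. Distinct such $Y$ produce non-isometric hedgehogs by Proposition~\ref{prop:ezhdist}, so $|Z[\widehat{\Q_+}]|\le\fc$.

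For the matching lower bound, I would exhibit the family $\bigl\{\widehat{Y_\alpha}:\alpha\in\R_+\setminus\Q_+\bigr\}$ with $Y_\alpha:=\Q_+\cup\{\alpha\}$. Each $Y_\alpha$ is countable and dense in $\R_+$, and the verification that $\widehat{Y_\alpha}\in Z[\widehat{\Q_+}]$ is identical to that in~(1) — the argument there never used the group structure of $H$, only countability and density of $H$ and of each rescaling $\lambda H$. Since $\alpha\notin\Q_+$, the sets $Y_\alpha$ are pairwise distinct, so Proposition~\ref{prop:ezhdist} gives $\fc$ pairwise non-isometric hedgehogs in $Z[\widehat{\Q_+}]$.

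The main obstacle is not really conceptual: all the substantial work — classifying complete spaces at zero distance from a hedgehog (Theorem~\ref{thm:ezh0disthatX}), pinning down the $\widehat{\Q_+}$ case (Corollary~\ref{cor:ezh0distQ}), and identifying isometric hedgehogs (Proposition~\ref{prop:ezhdist}) — is already in hand. The only points requiring a little care are the cardinal arithmetic for the upper bound (handling multiplicities correctly) and the observation that the verification in (1) carries over verbatim to arbitrary countable dense multi-sets, which is what drives the lower bound.
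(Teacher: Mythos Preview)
Your argument is correct. For part~(1) and for the upper bound in~(2) you follow essentially the paper's route, only spelling out in more detail what the paper compresses into the phrase ``follows from Corollaries~\ref{cor:ezh0distQ} and~\ref{cor:isometryH}'' and ``countable subsets of $(0,\infty)\times\N$''. The genuine difference is in the lower bound: the paper invokes the fact that $\R_+$ has exactly $\fc$ countable dense \emph{subgroups} and then appeals directly to part~(1), whereas you exhibit the explicit family $Y_\alpha=\Q_+\cup\{\alpha\}$ and observe that the verification of membership in the center used only countability and density, not the group law. Your route is more elementary---it avoids having to justify the count of countable dense subgroups of $\R_+$---while the paper's route has the virtue of making part~(2) an immediate corollary of part~(1) and simultaneously exhibits that already hedgehogs over subgroups realize the full cardinality of the center.
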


\begin{proof}
The validity of the first assertion follows from Corollaries~\ref{cor:ezh0distQ} and~\ref{cor:isometryH}.

Any space from the center is a hedgehog over a countable subset of the product $(0,\infty )\x\N$. There are exactly continuum of countable subsets in this set. So the inequality $\bigl|Z[\widehat{\Q_+}]\bigr|\le\fc=2^{\aleph_0}$ is proved. The group $\R_+$ has exactly continuum of dense countable subgroups, so the reverse inequality follows from the first assertion.
\end{proof}

\begin{rmk}
In fact, for any subgroup $|H|\le\aleph_0$ there exists a space $T_H=\widehat{X_H}\in Z[\widehat{\Q_+}]$ such that $\St\widehat{X_H}=H$. Since for a non-dense subgroup the countable dense subset $X_H\ss\R_+$ is constructed by a more complicated procedure, then we drop it.

Corollary~\ref{cor:ezh0distQ} can be transformed into a criterion for the distance between two hedgehogs to be equal to zero. We have such a criterion is needed only in the case of the maximum hedgehog, where we decided to give a direct proof for a special case.
\end{rmk}

\begin{cor}\label{cor:0distR}
For a subgroup $H\ss \R_+$ the following properties are equivalent\/\rom:
\begin{enumerate}
\item\label{enum:0distR:1} $d_{GH}\bigl(\hH,\widehat{\R_+}\bigr)=0$.
\item\label{enum:0distR:2} $|H|=\fc=2^{\aleph_0}$.
\end{enumerate}
\end{cor}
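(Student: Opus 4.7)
The plan is to handle the two implications separately. For $(\ref{enum:0distR:1}) \Rightarrow (\ref{enum:0distR:2})$, I would apply Proposition~\ref{prop:density} to conclude $d(\hH) = d(\widehat{\R_+})$ and rewrite both sides via property~(\ref{enum:propert:5}) of Proposition~\ref{prop:propert} as $|H|+1$ and $|\R_+|+1 = \fc$. Since $\fc$ is infinite, the equation $|H|+1 = \fc$ rules out $|H|$ being finite, and then $|H|+1 = |H|$ for infinite $|H|$ forces $|H| = \fc$.

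For $(\ref{enum:0distR:2}) \Rightarrow (\ref{enum:0distR:1})$ I would imitate the partition-and-correspondence argument from the proof of Corollary~\ref{cor:ezh0distQ}. Fix $\varepsilon > 0$, partition $\R_+$ into half-open intervals $S_n = ((n-1)\varepsilon, n\varepsilon]$, choose a bijection $\phi_n\colon H\cap S_n \to \R_+\cap S_n$ for each $n$, and assemble them together with the pair $(0_H, 0_{\R_+})$ into a bijective correspondence $R \in \cR(\hH, \widehat{\R_+})$. Paired non-zero points lie in the same $S_n$, so their coordinates differ by less than $\varepsilon$, whence $\dis R \le 2\varepsilon$; Theorem~\ref{thm:dis} then gives $d_{GH}(\hH, \widehat{\R_+}) \le \varepsilon$, and letting $\varepsilon \to 0$ concludes.

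The substantive step is verifying that the bijections $\phi_n$ exist, i.e., that $|H \cap S_n| = \fc$ for every $n$. I would deduce this from the auxiliary lemma: \emph{if $G$ is an additive subgroup of $\R$ with $|G| = \fc$, then $|G \cap (a,b)| = \fc$ for every non-empty open interval $(a,b)$}. Applied to $G = \log H$, the lemma transports back to $H$ via $\exp$. To prove the lemma I would first note that $G$ is dense in $\R$, since any non-dense subgroup of $\R$ is cyclic and hence countable, contradicting $|G| = \fc$. Next, in the partition $\R = \bigsqcup_{n \in \mathbb{Z}}[n\delta,(n+1)\delta)$ pigeonhole on $|G| = \fc > \aleph_0$ yields a piece $G \cap [n\delta,(n+1)\delta)$ of cardinality $\fc$, and subtracting any fixed element of that piece from it produces an injection of these $\fc$ elements into $G \cap (-\delta,\delta)$. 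Finally, for an arbitrary interval $(a,b)$, density gives $g_0 \in G \cap (a,b)$, and the shift $g_0 + (G \cap (-\delta,\delta)) \sse G \cap (a,b)$ delivers $\fc$ elements of $G$ inside $(a,b)$ for any $\delta \le \min(g_0-a, b-g_0)$. The main obstacle is precisely this lemma: density alone is too weak (witness $\Q$, which meets every interval in only countably many points), so both the cardinality hypothesis and the group structure must be exploited simultaneously through the pigeonhole-and-translate trick; once it is in hand, the remainder of the argument is essentially identical to the $\widehat{\Q_+}$ case.
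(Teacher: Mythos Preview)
Your proposal is correct and follows essentially the same route as the paper: the density argument via Proposition~\ref{prop:density} and Proposition~\ref{prop:propert}(\ref{enum:propert:5}) for $(\ref{enum:0distR:1})\Rightarrow(\ref{enum:0distR:2})$, and the partition-and-bijection argument of Corollary~\ref{cor:ezh0distQ} for $(\ref{enum:0distR:2})\Rightarrow(\ref{enum:0distR:1})$, hinging on the fact that a subgroup $H\sse\R_+$ of cardinality $\fc$ meets every open interval in $\fc$ many points. The paper simply asserts this ``uniform density'' without proof, whereas you supply a clean justification via $\log H$ and the pigeonhole-and-translate trick (which correctly uses $\operatorname{cf}(\fc)>\aleph_0$); so your write-up is in fact more complete than the paper's on this point.
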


\begin{proof}
$(\ref{enum:0distR:1})\Rightarrow(\ref{enum:0distR:2})$. According to  Proposition~\ref{prop:density} and equality~(\ref{enum:propert:5}) of Proposition~\ref{prop:propert}, we have $|H|=d(\hH)=d(\widehat{\R_+})=|\R_+|=\fc$.

$(\ref{enum:0distR:2})\Rightarrow(\ref{enum:0distR:1})$.
Every subgroup of cardinality $\fc$ is uniformly dense, that is $|H\cap U|=\fc$ for every open set $U\ss\R_+$. The proof is now completed by repeating the argument from the proof of Corollary~\ref{cor:ezh0distQ}.
\end{proof}

\begin{thm}\label{thm:StabilhatR}
The center $Z[\widehat{\R_+}]$ has the following properties\/\rom:
\begin{enumerate}
\item For every subgroup $H\sse\R_+$ of cardinality $\fc$, the hedgehog $\hH$ is in the center $Z[\widehat{\R_+}]$ and $\St\hH=H$.
\item $\bigl|Z[\widehat{\R_+}]\bigr|=2^\fc=2^{2^{\aleph_0}}$.
\end{enumerate}
\end{thm}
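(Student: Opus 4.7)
My plan is to deduce part~(1) directly from the machinery assembled in the earlier sections, and for part~(2) to sandwich $|Z[\widehat{\R_+}]|$ between matching lower and upper bounds of $2^\fc$.

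For part~(1), given a subgroup $H\sse\R_+$ with $|H|=\fc$, the hedgehog $\hH$ is complete by Proposition~\ref{prop:propert}(\ref{enum:propert:1}) and satisfies $d_{GH}(\hH,\widehat{\R_+})=0$ by Corollary~\ref{cor:0distR}, so it lies in the cloud $[\widehat{\R_+}]$. Since $\widehat{\l\R_+}=\widehat{\R_+}$ for every $\l\in\R_+$ by Proposition~\ref{prop:propert}(\ref{enum:propert:2}), the chain of inclusions from the introduction gives $\R_+=\St\widehat{\R_+}\sse\St_0\widehat{\R_+}\sse\St[\widehat{\R_+}]\sse\R_+$, so $\St[\widehat{\R_+}]=\R_+$. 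Property~(\ref{enum:Stab:2}) then yields $\St_0\hH=\St_0\widehat{\R_+}=\R_+=\St[\widehat{\R_+}]$, placing $\hH$ in $Z[\widehat{\R_+}]$. The equality $\St\hH=H$ is immediate from Corollary~\ref{cor:isometryH}.

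For the lower bound in part~(2), I would combine part~(1) with the observation (mentioned in the introduction) that $\R_+$ has exactly $2^\fc$ subgroups of cardinality $\fc$: under $\log\:\R_+\to(\R,+)$ such subgroups correspond to additive subgroups of $\R$ of cardinality $\fc$, and since $\R$ is a $\Q$-vector space of dimension $\fc$ there are $2^\fc$ distinct $\Q$-subspaces producing different such subgroups. For any two distinct such subgroups $H\ne H'$, Proposition~\ref{prop:ezhdist} guarantees that $\hH$ and $\hH'$ are non-isometric, and part~(1) places both in the center. This produces $2^\fc$ pairwise non-isometric elements of $Z[\widehat{\R_+}]$.

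For the upper bound, take any $T\in Z[\widehat{\R_+}]$. Then $T$ is complete and at zero distance from $\widehat{\R_+}$, so Theorem~\ref{thm:ezh0disthatX} provides a subset with multiplicities $Y\sse\overline{\R_+}=\R_+$ such that $T$ is isometric to $\hY$. By Proposition~\ref{prop:density} combined with Proposition~\ref{prop:propert}(\ref{enum:propert:5}), the cardinality of $Y$ counted with multiplicities satisfies $|Y|+1=d(\hY)=d(\widehat{\R_+})=\fc$, so $|Y|=\fc$ and in particular every multiplicity is at most $\fc$. Identifying $Y$ with a subset of the base set $\R_+\x\fc$, which itself has cardinality $\fc$, the number of possibilities for $Y$ is at most $2^\fc$, and by Proposition~\ref{prop:ezhdist} different such $Y$ give non-isometric hedgehogs, yielding $|Z[\widehat{\R_+}]|\le 2^\fc$. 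The main obstacle I anticipate is the bookkeeping in this upper bound: one must be careful that Theorem~\ref{thm:ezh0disthatX} really produces a subset-with-multiplicities whose total cardinality is exactly $\fc$ (not merely that the geometric support is dense in $\R_+$), and that counting such subsets of $\R_+\x\fc$ stays controlled at $2^\fc$. Once those cardinal-arithmetic steps are checked, the two bounds coincide and the proof is complete.
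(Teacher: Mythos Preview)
Your proof is correct and follows essentially the same route as the paper's: part~(1) via Corollaries~\ref{cor:0distR} and~\ref{cor:isometryH}, the lower bound in~(2) via the $2^\fc$ subgroups of $\R_+$ of cardinality $\fc$, and the upper bound via Theorem~\ref{thm:ezh0disthatX} together with counting subsets of a set of size $\fc$. Your write-up simply spells out more of the intermediate steps (completeness, $\St[\widehat{\R_+}]=\R_+$, the cardinality of $Y$) than the paper does; the one superfluous remark is invoking Proposition~\ref{prop:ezhdist} in the upper-bound paragraph, since for $|Z[\widehat{\R_+}]|\le 2^\fc$ you only need that every $T$ arises from \emph{some} $Y$, not that distinct $Y$'s give distinct $T$'s.
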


\begin{proof}
The validity of the first assertion follows from Corollaries~\ref{cor:0distR} and~\ref{cor:isometryH}.

Any space from the center is a hedgehog over a subset of the product $(0,\infty)\x(0,\infty)$. There are exactly $2^\fc$ subsets in this set. So the inequality $\bigl|Z[\widehat{\Q_+}]\bigr|\le2^\fc$ is proved. The group $\R_+$ has exactly $2^\fc$ of subgroups of cardinality $\fc$, so the reverse inequality follows from the first assertion.
\end{proof}

\begin{rmk}
The center $Z[\widehat{\R_+}]$ contains the hedgehog itself, so it contains space with the maximum possible isometric stabilizer. The center $Z[\widehat{\Q_+}]$ does not contain space with the maximum possible isometric stabilizer.
\end{rmk}


\begin{thebibliography}{00}
\bibitem{Edwards} Edwards D. \emph{The Structure of Superspace. In: Studies in Topology}, ed. by Stavrakas N.M. and Allen K.R., New York, London, San Francisco, Academic Press, Inc., 1975.
\bibitem{Gromov1981} Gromov M. \emph{Structures m\'etriques pour les vari\'et\'es riemanniennes}, edited by Lafontaine and Pierre Pansu, 1981.
\bibitem{Gromov1999} Gromov M. \emph{Metric structures for Riemannian and non-Riemannian spaces}, Birkh\"auser (1999). ISBN 0-8176-3898-9 (translation with additional content).
\bibitem{BurBurIva} Burago D., Burago Yu., Ivanov S. \emph{A Course in Metric Geometry}, AMS GSM 33, 2001.
\bibitem{BT21} Bogatyy S.A., Tuzhilin A.A. \emph{Gromov--Hausdorff class: its completeness and cloud geometry}, 2021, ArXiv e-prints, arXiv:2110.06101 [math.MG].
\bibitem{BT22P} Bogatyy S.A., Tuzhilin A.A. \emph{Similarity transformation action on families of metric spaces}, to appear.
\bibitem{BB22T} Bogataya S.I., Bogatyy S.A. \emph{Isometric Cloud Stabilizer}, Topolody and Applications, to appear.
\bibitem{BBRT22V} Bogataya S.I., Bogatyy S.A., Redkozubov V.V., Tuzhilin A.A. \emph{Cloud with trivial stabilizer}, Vestnik MGU, to appear.
\bibitem{Ghanaat} Ghanaat P. \emph{Gromov-Hausdorff distance and applications}.
In: Summer school ``Metric Geometry'', Les Diablerets, August 25--30, 2013,\\
\url{https://math.cuso.ch/fileadmin/math/document/gromov-hausdorff.pdf}
\bibitem{TuzLect} Tuzhilin A.A. \emph{Lectures on Hausdorff and Gromov--Hausdorff Distance Geometry}. 2020, ArXiv e-prints, arXiv:2012.00756 [math.MG].
\bibitem{Borisova} Borisova O.B. \emph{Metric segments in Gromov--Hausdorff class}. Master thesis, 2021, in Rissuan,\\
\url{http://dfgm.math.msu.su/files/0students/2021-dip-Borisova.pdf}
\end{thebibliography}
\end{document}